\documentclass[psamsfonts,11pt]{amsart}

\usepackage{amssymb,amsfonts,tikz,mathtools,amsthm, amsrefs}
\usepackage[all,arc]{xy}
\usepackage{enumerate}
\usepackage{mathrsfs}
\usepackage{setspace}
\linespread{1.1}

\newtheorem{thm}{Theorem}[section]
\newtheorem{cor}[thm]{Corollary}
\newtheorem{prop}[thm]{Proposition}
\newtheorem{lem}[thm]{Lemma}
\newtheorem{conj}[thm]{Conjecture}

\theoremstyle{definition}
\newtheorem{defn}[thm]{Definition}

\theoremstyle{remark}

\newcommand{\bR}{{\mathbb R}}
\newcommand{\bZ}{{\mathbb Z}}
\newcommand{\bQ}{{\mathbb Q}}
\newcommand{\bC}{{\mathbb C}}
\newcommand{\bT}{{\mathbb T}}
\newcommand{\bN}{{\mathbb N}}

\renewcommand{\d}{{\mathrm d}}
\newcommand{\rmm}{{\mathrm m}}

\renewcommand{\th}{{\widetilde{h}}}
\newcommand{\tH}{{\widetilde{H}}}

\newcommand{\bfx}{{\mathbf{x}}}

\newcommand{\hide}[1]{}

\numberwithin{equation}{section}

\bibliographystyle{plain}

\title[M\"{o}bius Disjointness for Skew Products on Heisenberg
Nilmanifold]{M\"{o}bius Disjointness for Skew Products on the Heisenberg
Nilmanifold}

\author{Matthew Litman}
\address{University of California - Davis, Davis, CA 95616, USA}
\email{mclitman@ucdavis.edu}

\author{Zhiren Wang}
\address{Pennsylvania State University,
University Park, PA 16801, USA}
\email{zhirenw@psu.edu}

\thanks{This paper was the outcome of an undergraduate research project sponsored by the Eberly College of Science at Penn State University during the 2016-2017 academic year. M.L. thanks the ECoS for its support. Z.W., the faculty mentor of the project, was supported by the NSF grant DMS-1501295.}

\date{\today}

\begin{document}

\maketitle

\begin{abstract} We prove that the M\"obius function is disjoint to all Lipschitz continuous skew product dynamical systems on the 3-dimensional Heisenberg nilmanifold over a minimal rotation of the 2-dimensional torus.
\end{abstract}

\section{Introduction}

\subsection{Setting and statement} The M\"obius function $\mu:\bN\to\{-1,0,1\}$ is defined as follows: $\mu(n)=(-1)^k$ if $n$ is the product of $k$ distinct primes, and $\mu(n)=0$ otherwise. Sarnak's M\"obius disjointness conjecture states that $\mu(n)$ is highly random, in the sense that it is orthogonal to all continuous observables from zero-entropy topological dynamical systems. In this article, we deal with a special case of this conjecture, namely Lipschitz continuous skew product maps on the 3-dimensional Heisenberg nilmanifold.

The Heisenberg group is 
\begin{equation}\label{EqGroup}G=\{(x,y,z):x,y,z\in\mathbb{R}\}\cong\mathbb{R}^3\end{equation} equipped with the group rule \begin{equation}\label{EqGroupRule}(x,y,z)(x',y',z')=(x+x',y+y',z+z'+(xy'-x'y)).\end{equation} Set $\Gamma = G(\mathbb{Z})=\{(x,y,z)\in G: x,y,z\in\bZ\}$ and $X=G/\Gamma$. Then $X$ is a compact nilmanifold and its maximal torus factor is $\bT^2=\bR^2/\bZ^2$, parametrized by the $x$ and $y$ coordinates. $X$ is a principal $\bT^1$-bundle over $\bT^2$. $G$ acts on $X$ by left translation.

For $\alpha,\beta\in\bR$ and a continuous function $h:\mathbb{T}^2 \to \bT^1$, define $T:X \to X$ by 
\begin{equation}\label{EqMap}\bfx\mapsto (\alpha, \beta, \th(x,y))\bfx,\end{equation} where $\bfx= (x,y,z) \Gamma$, $\th(x,y)$ is any lifting of the value $h(x,y)\in\bT^1$ to $\bR$, and $(\alpha, \beta, \th(x,y))$ stands for an element in $G$. Here we regard $h$ as a $\bZ^2$-periodic function on $\bR^2$. 

Indeed, the choice of $\th(x,y)$ does not matter. This is because for two different choices of $\th(x,y)$, the values of $(\alpha, \beta, \th(x,y))$ differ by translation by an element from the group $C=\{(0,0,m):m\in\bZ\}$. This group is both in the center of $G$ and in $\Gamma$, so the two different choices of $(\alpha, \beta, \th(x,y))\bfx$ represent the same point in $X=G/\Gamma$.

Without causing confusion, we will simply write \eqref{EqMap} as
\begin{equation}\label{EqMap2}T:\bfx\mapsto (\alpha, \beta, h(x,y))\bfx.\end{equation} Here $(\alpha, \beta, h(x,y))$ should be think of as an element in the quotient group $G/C$.

The map $T$ is an isometric extension of the translation by $(\alpha,\beta)$ on $\bT^2$, which we denote by $T_0$. Namely, $T_0$ is a factor of $T$, and $T$ send fibers (which are circles $\bT^1$) to fibers by isometries. In particular, $(X,T)$ is a distal dynamical system and has zero topological entropy.

Recall that $T_0$ is minimal and ergodic on $\bT^2$ if $\alpha$, $\beta$, $1$ are linearly independent over $\bQ$. Otherwise, every orbit of $T_0$ is contained in a finite union of parallel $1$-dimensional subtori in $\bT^2$.

Our main result is:

\begin{thm}\label{ThmMain} If $\alpha$, $\beta$, $1$ are linearly independent over $\bQ$ and $h:\bT^2\to\bT^1$ is Lipschitz continuous, then \begin{equation}\label{EqSarnak}\lim_{N\to\infty}\frac1N\sum_{n=1}^Nf(T^n\bfx)\mu(n)=0,\ \forall\bfx\in X,\forall f\in C(X).\end{equation}\end{thm}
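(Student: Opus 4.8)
The plan is to combine the Bourgain--Sarnak--Ziegler (BSZ) criterion with a Fourier decomposition in the central (fiber) direction and an equidistribution analysis of the cocycle generated by $h$. First I would record the explicit form of the iterates. Writing $e(t)=e^{2\pi it}$ and decomposing any $f\in C(X)$ into its central isotypic components $f=\sum_m f_m$, where $f_m(\bfx)=e(mz)F_m(x,y)$ and $F_m$ is a section of the degree-$m$ theta bundle over $\bT^2$ (so $F_0$ is an ordinary function on $\bT^2$), a direct computation from \eqref{EqGroupRule} and \eqref{EqMap2} gives, for $u=(x,y)$ and $\theta=(\alpha,\beta)$,
\begin{equation*}
f_m(T^n\bfx)=e(mz)\,F_m(u+n\theta)\,e\big(mn(\alpha y-\beta x)\big)\,e\big(m S_n(u)\big),\qquad S_n(u)=\sum_{k=0}^{n-1}\th(u+k\theta).
\end{equation*}
Since the M\"obius average is linear in $f$ and controlled by $\|f\|_\infty$, and finite central sums are dense in $C(X)$, it suffices to prove \eqref{EqSarnak} for each single component $f=f_m$.

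The component $m=0$ is classical: $f_0(T^n\bfx)=F_0(u+n\theta)$ is a point-evaluation of a fixed continuous function along the minimal rotation, so expanding $F_0$ in a Fourier series on $\bT^2$ reduces \eqref{EqSarnak} to the uniform Davenport bound $\sup_{\vartheta}\big|\frac1N\sum_{n\le N}\mu(n)e(n\vartheta)\big|\to 0$. For $m\ne 0$ I would invoke the BSZ criterion: it is enough to show that for distinct primes $p,q$ the correlations
\begin{equation*}
C_N(p,q)=\frac1N\sum_{n\le N}f_m(T^{pn}\bfx)\,\overline{f_m(T^{qn}\bfx)}
\end{equation*}
tend to $0$ as $N\to\infty$, with the required control as $p,q\to\infty$. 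The base-point phase $e(mz)$ cancels against its conjugate, and the summand reduces to $F_m(u+pn\theta)\overline{F_m(u+qn\theta)}$ times $e\big(m(p-q)n(\alpha y-\beta x)\big)$ times $e\big(m(S_{pn}(u)-S_{qn}(u))\big)$, where $S_{pn}-S_{qn}$ is a Birkhoff sum of $\th$ over the orbit segment between the two progressions.

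The heart of the argument is therefore the asymptotic behaviour of $C_N(p,q)$, which I would read as an ergodic average of $F=f_m\otimes\overline{f_m}$ along the orbit of $(\bfx,\bfx)$ under the skew product $T^p\times T^q$ on $X\times X$. Using that $\alpha,\beta,1$ are $\bQ$-linearly independent and that $p,q$ are coprime, the base orbit $n\mapsto(u+pn\theta,u+qn\theta)$ equidistributes (by Weyl) in the $2$-dimensional subtorus $H=\{(pw,qw):w\in\bT^2\}\subset\bT^4$. One then lifts this to the circle fibers: $T^p\times T^q$ is an isometric extension of the minimal rotation on $H$, hence distal, and I would show its orbit closure through $(\bfx,\bfx)$ carries a unique invariant measure $\nu_{p,q}$, so that $C_N(p,q)\to\int F\,d\nu_{p,q}$. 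Finally I would argue that this integral vanishes: $F$ transforms under the two central circle actions with weights $(m,-m)$, and unless the combined fiber cocycle is degenerate the measure $\nu_{p,q}$ averages this nonzero weight to zero.

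The main obstacle is precisely this fiber analysis for a merely Lipschitz $h$. Because $h$ need not be polynomial, the extension is not a nilsystem and I cannot appeal to the Green--Tao/Leibman equidistribution theory; instead the unique ergodicity and the vanishing of $\int F\,d\nu_{p,q}$ must be established directly from the Anzai--Furstenberg theory of circle extensions of rotations, controlling the twisted Birkhoff sums $e(m(S_{pn}-S_{qn}))$. I expect a genuine dichotomy. If $m\th$ is not cohomologous (over the base rotation) to a constant, the extension is ergodic, hence uniquely ergodic, and the weight-$(m,-m)$ integral vanishes as above. In the degenerate case, where $m\th$ is cohomologous to a constant $\bar h$, the twisted sums collapse, $e(m(S_{pn}-S_{qn}))$ becomes a pure exponential $e(m(p-q)n\bar h)$ up to a coboundary absorbed into $F_m$, and $C_N(p,q)$ reduces to an average of a continuous function on $H$ against a character; this again vanishes by equidistribution unless a rationality resonance forces the component to reduce to a torus rotation, handled by the Davenport bound of the $m=0$ step. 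Securing the quantitative, base-point-uniform control needed to feed all of this into the BSZ criterion---rather than merely qualitative unique ergodicity---is where the real work lies.
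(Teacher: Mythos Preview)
Your overall architecture matches the paper: decompose $f$ into vertical Fourier components, dispatch the zero-frequency piece with Davenport, and for a nonzero frequency apply the K\'atai--Bourgain--Sarnak--Ziegler criterion to reduce to showing $C_N(p,q)\to 0$ for each pair of distinct primes. The paper also rewrites $C_N(p,q)$ as an ergodic average for the joined map $T^p\times T^q$ restricted to the subnilmanifold $X_1=\{q(x_1,y_1)=p(x_2,y_2)\}$ and then passes to the quotient $X_\ast=X_1/D$ by the diagonal center. So up to this point you and the paper are aligned.

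The gap is in how you propose to finish. Your dichotomy is phrased in terms of whether $m\th$ is cohomologous to a constant over the base rotation; but that is a statement about the original system $(X,T)$, not about the joined system. Unique ergodicity of $T$ does not transfer to unique ergodicity of $T^p\times T^q$ on $X_1$ (or of $T_\ast$ on $X_\ast$): the relevant cohomological obstruction is whether $k\big(h_p(p\cdot,p\cdot)-h_q(q\cdot,q\cdot)\big)$, \emph{together with the twist term coming from the non-trivial bundle}, is a coboundary over the $(\alpha,\beta)$-rotation. This is not the same question, and your argument does not bridge the two. In the ``degenerate'' branch you also implicitly need the transfer function $\psi$ to be continuous in order to absorb it into $F_m$ and apply Weyl equidistribution, but only a measurable $\psi$ is guaranteed.

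What you are missing is precisely the paper's main point: the quotient $X_\ast$ is a circle bundle over $\bT^2$ of degree $p^2-q^2\neq 0$, and this nonzero degree forces the cohomological equation to be unsolvable. Concretely, after trivializing $X_\ast$ piecewise over $[0,1)^2$, the skew cocycle becomes
\[
H'(x,y)=H(x,y)+(p^2-q^2)\big((\alpha y-\beta x)-(x+\alpha)\lfloor y+\beta\rfloor+\lfloor x+\alpha\rfloor(y+\beta)\big),
\]
and the paper shows (its Lemma~2.14), using the Lipschitz bound on $h$ together with the irrationality of $\beta$, that $kH'$ is never a measurable coboundary: the iterated cocycle $F_n$ picks up a drift of size $\asymp n\,|k(p^2-q^2)d_1-k(p^2-q^2)\beta-\beta|$ across the fundamental domain, while the Lipschitz constant of $F_n$ grows only like $n$, and a Luzin-set argument forces a contradiction. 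This is where the Lipschitz hypothesis is actually used, and it is a purely topological/cohomological mechanism that works for \emph{every} Lipschitz $h$, with no dichotomy needed.

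Finally, your last sentence misidentifies the difficulty. No quantitative or $p,q$-uniform control is required: the K\'atai--BSZ criterion in its contrapositive form says that failure of \eqref{EqSarnak} produces a \emph{single} pair $p>q$ with $C_N(p,q)\not\to 0$, so it suffices to prove qualitative unique ergodicity of $T_\ast$ for each fixed pair. The real work is the cohomological lemma above, not uniformity.
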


We remark that the assumption on $\alpha$, $\beta$ is in place only to guarantee minimality, and no extra Diophantine conditions are needed.

\subsection{Background and motivation} The M\"obius disjointness conjecture, proposed by Sarnak \cite{S09}, is:

\begin{conj}\label{ConjSarnak} For a topological dynamical system $(X,T)$, if $h_{\mathrm{top}}(T)=0$, then \eqref{EqSarnak} holds.\end{conj}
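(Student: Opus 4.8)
The final statement is Sarnak's conjecture in its full generality: it asserts M\"obius disjointness for \emph{every} zero-entropy topological dynamical system $(X,T)$, not for any single family. This is a major open problem, so what follows is a plan of attack rather than a route to a complete proof. The natural first move is to eliminate the individual point $\bfx$ and the individual observable $f$ by converting \eqref{EqSarnak} into a statement purely about a bounded sequence. Fixing $f\in C(X)$ and $\bfx\in X$, I would set $a_n=f(T^n\bfx)$, so that $|a_n|\le\|f\|_\infty$, and the goal becomes showing that $(a_n)$ is orthogonal to $\mu$. The principal tool here is the Katai--Bourgain--Sarnak--Ziegler orthogonality criterion: if
\begin{equation*}\lim_{N\to\infty}\frac1N\sum_{n\le N}a_{pn}\overline{a_{qn}}=0\quad\text{for all distinct primes }p,q,\end{equation*}
then $\frac1N\sum_{n\le N}a_n\mu(n)\to0$. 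This cleanly separates the arithmetic of $\mu$ (its bilinear structure over primes) from the dynamics, reducing the problem to showing that the time-$p$ and time-$q$ iterates of $T$ decorrelate on average.

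The second step is to use the hypothesis $h_{\mathrm{top}}(T)=0$ to establish this decorrelation. Heuristically, zero entropy forces orbits to have subexponential complexity, so the pushed-forward observables $f\circ T^{pn}$ and $f\circ T^{qn}$ ought not to conspire to correlate as $p,q\to\infty$. To make this rigorous the plan would be to pass to an ergodic model, invoke the Host--Kra--Ziegler structure theory to split $f$ into a ``structured'' part controlled by a nilfactor and a part orthogonal to all nilsequences, dispatch the structured part using the Green--Tao--Ziegler theorem on the M\"obius orthogonality to nilsequences, and control the remaining ``random'' part via the decay of correlations expected from vanishing entropy. An alternative route is to pass through the Chowla conjecture on the autocorrelations of $\mu$, which by results of Sarnak and Tao is known to imply the present conjecture after suitable (e.g.\ logarithmic) averaging.

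The hard part, and indeed the reason the statement is a conjecture, is that this program does not close in full generality. There is presently no method that verifies the orthogonality-criterion correlation bound, or the required structural decomposition, for an \emph{arbitrary} zero-entropy system; both the Chowla conjecture and the ``random part'' decorrelation estimate remain open. Consequently the realistic plan is to prove the conjecture class by class, establishing the $p,q$-correlation bound for progressively larger families of low-complexity systems until, optimistically, a uniform argument emerges. The present paper is exactly such a contribution: it verifies \eqref{EqSarnak} for the Heisenberg skew products of Theorem~\ref{ThmMain} by analyzing the prime-multiple correlations directly, exploiting the isometric-extension structure of $T$ over the minimal torus rotation $T_0$ in place of the unavailable general mechanism.
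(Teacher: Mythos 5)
You are right that this statement is Sarnak's conjecture itself, which remains open: the paper does not (and could not) supply a proof of it, stating it only as motivation and proving just the special case of Theorem \ref{ThmMain}. Your assessment is accurate --- the K\'atai--Bourgain--Sarnak--Ziegler criterion reduces \eqref{EqSarnak} to prime-multiple correlation bounds that no one can verify for an arbitrary zero-entropy system, and your description of how the paper closes this gap in its special case (the criterion combined with the isometric-extension structure over the minimal torus rotation, exploiting the nontrivial circle-bundle topology of the Heisenberg nilmanifold) matches what the authors actually do, so there is nothing further to compare.
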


The conjecture has been the subject of many recent researches. For known cases of the conjecture, see \cites{B13a, B13b, BSZ13, D37, DK15, EKL16, EKLR15, ELD14, ELD15, FJ15, FFKL16, FKLM15, G12, GT12, HLSY15, HWZ16, KL15, LS15, MMR14, MR10, MR15, M16, P15, V16, W17}, to list a few.

An important class of zero entropy topological dynamical systems are distal dynamical systems. By Furstenberg's structure theorem \cite{F63}, minimal distal systems are inverse limits of towers of isometric extensions.

M\"obius disjointness for homogeneous distal dynamical systems were known by the works of Davenport \cite{D37} for rotations of the circle, of Green-Tao \cite{GT12} for nilflows, and of Liu-Sarnak \cite{LS15} for all affine distal flows.

According to Furstenberg's structure theorem, the simplest non-homogeneous distal systems are 2-step isometric extensions, i.e. an isometric extension of a rotation on a compact abelian group. 

For manifolds, $\bT^2$ is the smallest on which one can create such a map, which is the skew product $T(x,y)=(x+\alpha, y+h(x))$. M\"obius disjointness for such skew products is proved for generic $\alpha$ when $h$ is $C^{1+\epsilon}$ by Ku\l{}aga-Pryzmus and Lemanczyk \cite{KL15}, as well as for all $\alpha$ when $T$ is analytic by Liu and Sarnak \cite{LS15} and Wang \cite{W17}. 

The aim of this paper is to demonstrate that the problem is easier to handle for non-homogeneous dynamical systems when the isometric extension's underlying fiber bundle structure is not trivial.

In the settings of Theorem \ref{ThmMain}, the Heisenberg nilmanifold is a non-trivial principal circle bundle over $\bT^2$. The twistedness of the topology allows to show unique ergodicity of a dynamical system that is induced from $T$ using the K\'atai-Bourgain-Sarnak-Ziegler criterion \cites{K86, BSZ13}, assuming Lipschitz continuity. In contrast, for skew products on $\bT^2$, which is a trivial circle bundle over the circle, the works \cite{LS15} and \cite{W17} required either methods from harmonic analysis or Matom\"aki-Radziwi\l{}\l{}-Tao bounds \cite{MRT15} on short averages of multiplicative functions, in addition to the K\'atai-Bourgain-Sarnak-Ziegler criterion, and needed to assume analyticity.

We remark that the proof of Theorem \ref{ThmMain} can be easily extended to skew products on higher dimensional Heisenberg manifolds and other 2-step nilmanifolds. However, we are not going to pursue this direction in detail.\\

\noindent{\it Notations.} On $\bT^1=\bR/\bZ$, $\|\cdot\|$ denotes the distance to the origin. The function $e(\cdot)$ on $\bT^1$ (or $\bR$) is defined as $e(x)=e^{2\pi ix}$. For a compact nilmanifold or torus $Y$, $\rmm_Y$ denotes the unique uniform probability measure on $Y$, which descends from a Haar measure on the universal cover of $Y$.\\

\section{Proof of Theorem \ref{ThmMain}}

\subsection{Reduction of the joining dynamics}\label{SecNonUE}

We suppose $\bfx_0\in X$ and a function $f_0\in C(X)$ does not satisfy \eqref{EqSarnak}. By translating both the point and the function,  we may assume without loss of generality that $\bfx_0$ is the identity point $\Gamma$ in $X=G/\Gamma$, i.e. represented by $(0,0,0)$.

\begin{defn}A continuous function $f: X\to\bC$ has vertical oscillation of frequency $\xi \in\bZ$ if for all $\tau\in X$ and $z\in\bR$, $$f((0,0,z)\tau)=e(\xi z)f(\tau).$$\end{defn}

\begin{lem}\label{LemVertical}There exists a non-zero integer $\xi$ and a continuous function $f: X\to\bC$ of vertical oscillation of frequency $\xi$, such that \begin{equation}\label{EqLemVertical}\frac1N\sum_{i=1}^Nf(T^n\bfx_0)\mu(n)\not\to 0\text{ as }N\to\infty.\end{equation}\end{lem}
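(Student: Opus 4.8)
The plan is to split $f_0$ according to its behaviour along the central circle fibers and to show that at least one non-trivial Fourier mode inherits the failure of \eqref{EqSarnak}. For $w\in\bR$ let $R_w:X\to X$ denote the vertical translation $R_w(\tau)=(0,0,w)\tau$. Since $(0,0,1)\in C\subset\Gamma$ is central, $R_w$ depends only on $w\bmod 1$ and defines a continuous action of $\bT^1$ on $X$ whose orbits are exactly the circle fibers of $X\to\bT^2$. For each $\xi\in\bZ$ I would set
\begin{equation*}(f_0)_\xi(\tau)=\int_0^1 f_0(R_w\tau)\,e(-\xi w)\,\d w.\end{equation*}
Because $(w,\tau)\mapsto f_0(R_w\tau)$ is jointly continuous on the compact space $\bT^1\times X$, each $(f_0)_\xi$ is continuous, and since $R_wR_u=R_{w+u}$ a change of variables gives $(f_0)_\xi(R_u\tau)=e(\xi u)(f_0)_\xi(\tau)$, i.e.\ $(f_0)_\xi$ has vertical oscillation of frequency $\xi$. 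These are precisely the vertical Fourier coefficients of $f_0$.

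Next I would dispose of the frequency-zero part. The function $(f_0)_0$ is constant along the fibers, hence descends to a continuous function $g$ on the maximal torus factor $\bT^2$; writing $\pi:X\to\bT^2$ for the factor map and using that $T_0$ is a factor of $T$ (so $\pi\circ T=T_0\circ\pi$), one gets $(f_0)_0(T^n\bfx_0)=g(T_0^n\pi(\bfx_0))$. As $\alpha,\beta,1$ are rationally independent, $T_0$ is a minimal rotation of $\bT^2$, for which M\"obius disjointness is classical: expanding $g$ in a Fourier series reduces the average to linear exponentials $\frac1N\sum_n e(n\theta)\mu(n)$, each of which tends to $0$ by Davenport's estimate \cite{D37}, the infinite Fourier tail being controlled by uniform approximation. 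Hence $\frac1N\sum_{n=1}^N(f_0)_0(T^n\bfx_0)\mu(n)\to0$.

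The remaining point is to pass from the individual modes back to $f_0$, and here the only genuine difficulty is that the vertical Fourier series of a merely continuous $f_0$ need not converge uniformly. I would circumvent this with Fej\'er means: the averages $\sigma_K f_0=\sum_{|\xi|\le K}\big(1-\tfrac{|\xi|}{K+1}\big)(f_0)_\xi$ are finite linear combinations of the modes $(f_0)_\xi$ and converge to $f_0$ \emph{uniformly} on $X$, being the convolution of $f_0$ with the positive Fej\'er kernel along the circle action $R_w$. Given $\epsilon>0$, fix $K$ with $\sup_{\tau\in X}|\sigma_K f_0(\tau)-f_0(\tau)|<\epsilon$; since $|\mu(n)|\le1$, the Ces\`aro averages built from $f_0$ and from $\sigma_K f_0$ differ by at most $\epsilon$, uniformly in $N$.

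Now I argue by contradiction. Suppose \eqref{EqLemVertical} failed for every non-zero $\xi$, i.e.\ $\frac1N\sum_{n=1}^N(f_0)_\xi(T^n\bfx_0)\mu(n)\to0$ for all $\xi\neq0$. Combined with the vanishing of the frequency-zero term, and since $\sigma_K f_0$ is a finite linear combination of the modes, this forces $\frac1N\sum_{n=1}^N(\sigma_K f_0)(T^n\bfx_0)\mu(n)\to0$ for each fixed $K$. Then $\limsup_N\big|\frac1N\sum_{n=1}^N f_0(T^n\bfx_0)\mu(n)\big|\le\epsilon$ for every $\epsilon>0$, so the average for $f_0$ would vanish, contradicting the standing assumption that $(\bfx_0,f_0)$ violates \eqref{EqSarnak}. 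Therefore some non-zero $\xi\in\bZ$ yields a function $f=(f_0)_\xi$ satisfying \eqref{EqLemVertical}, and by construction $f$ is continuous with vertical oscillation of frequency $\xi$. The main obstacle is exactly the uniform-convergence issue handled by the Fej\'er means; the rest is a routine Fourier decomposition along the central fibers.
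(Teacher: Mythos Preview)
Your proof is correct and follows essentially the same strategy as the paper: decompose $f_0$ into vertical Fourier modes, approximate uniformly by a finite sum of such modes, rule out the zero mode via Davenport's theorem on $\bT^2$, and conclude that some nonzero mode must inherit the failure of \eqref{EqSarnak}. The only difference is presentational: where the paper invokes \cite{GT12a}*{Lemma 3.7} to obtain the finite uniform approximation, you make this step self-contained by using Fej\'er means along the central circle action, which is exactly what that lemma amounts to.
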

\begin{proof}By our earlier hypothesis, there are $\delta\in(0,1)$ and a subsequence $\{N_i\}$ of $\bN$, such that $$\left|\frac1{N_i}\sum_{n=1}^{N_i}f_0(T^n\bfx_0)\mu(n)\right|>\delta.$$ On the other hand, by the proof of \cite{GT12a}*{Lemma 3.7}, there are finitely many continuous functions $f_j$, $1\leq j\leq J$ on $X$ of vertical oscillation, respectively of frequecy $\xi_j$, such that $\|f_0-\sum_{j=1}^Jf_j\|_{L^\infty}<\frac\delta2$. It follows that
$$
\left|\frac1{N_i}\sum_{n=1}^{N_i}f_0(T^n\bfx_0)\mu(n)-\sum_{j=1}^J\frac1{N_i}\sum_{n=1}^{N_i}f_j(T^{pn}\bfx_0)\mu(n)\right|<\frac\delta2.
$$ and hence $$
\left|\sum_{j=1}^J\frac1{N_i}\sum_{n=1}^{N_i}f_j(T^{pn}\bfx_0)\mu(n)\right|>\delta-\frac\delta2=\frac\delta2.
$$ for all $i$. In other words, $\sum_{j=1}^J\frac1N\sum_{n=1}^Nf_j(T^{pn}\bfx_0)\mu(n)\not\to 0$ as $N\to\infty$. We deduce that for at least one $j$, $\frac1N\sum_{n=1}^Nf_j(T^{pn}\bfx_0)\mu(n)\not\to 0$. Let $f=f_j$ and $\xi=\xi_j$. Then \eqref{EqLemVertical} holds.

It remains to claim that $\xi\neq 0$. Indeed, if $\xi=0$, then $f$ is constant under translations along the vertical subgroup $\{(0,0,z)\}$, which are fibers of $X\to\bT^2$. Equivalently, $f$ can be thought of as a continuous function on $\bT^2$, and \eqref{EqLemVertical} can be rewritten as $$\frac1N\sum_{i=1}^Nf(T_0^n(0,0))\mu(n)\not\to 0\text{ as }N\to\infty.$$ As $T_0$ is the translation by $(\alpha,\beta)$ on $\bT^2$, this contradicts Davenport's theorem \cite{D37}. So we conclude that $\xi\neq 0$.\end{proof}

The following important criterion guarantees M\"obius disjointness and is due to K\'atai \cite{K86} and Bourgain-Sarnak-Ziegler \cite{BSZ13}:

\begin{thm}For a dynamical system $(\mathcal X, T)$, a continuous function $f\in C(\mathcal X)$, and a point $x\in\mathcal X$, if the equation \eqref{EqSarnak} fails to hold, then there exist a pair of distinct primes $p>q$, such that $\frac1N\sum_{n=1}^Nf(T^{pn}x)\overline{f(T^{qn}x)}$ does not converge to $0$ as $N\to\infty$.\end{thm}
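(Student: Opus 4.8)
The plan is to establish the contrapositive in a quantitative form: I will show that if \emph{every} prime-pair correlation of the bounded sequence $a_n:=f(T^nx)$ tends to $0$, then $\frac1N\sum_{n\le N}\mu(n)a_n\to0$, which is exactly \eqref{EqSarnak}. After rescaling so that $\|f\|_{L^\infty}\le1$, hence $|a_n|\le1$, I fix a finite set $\mathcal P$ of primes and abbreviate $\lambda:=\sum_{p\in\mathcal P}1/p$ and $\omega(n):=\#\{p\in\mathcal P:p\mid n\}$. Writing $S_N:=\sum_{n\le N}\mu(n)a_n$, the engine of the argument is the elementary identity
\[
\Sigma:=\sum_{n\le N}\mu(n)a_n\,\omega(n)=\sum_{p\in\mathcal P}\sum_{\substack{n\le N\\ p\mid n}}\mu(n)a_n=-\sum_{p\in\mathcal P}\sum_{\substack{\ell\le N/p\\ p\nmid\ell}}\mu(\ell)a_{p\ell},
\]
which uses $\mu(p\ell)=-\mu(\ell)$ when $p\nmid\ell$ and $\mu(p\ell)=0$ when $p\mid\ell$. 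This is K\'atai's device for converting a multiplicative-sum estimate into a statement about the shifted sequences $a_{p\ell}$.

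First I would replace $\omega(n)$ by its mean value $\lambda$. The Tur\'an--Kubilius inequality gives $\sum_{n\le N}|\omega(n)-\lambda|^2\le CN\lambda$ with an absolute constant $C$, valid once $N$ is large relative to $\mathcal P$, so Cauchy--Schwarz together with $|a_n|\le1$ yields
\[
\bigl|\lambda S_N-\Sigma\bigr|=\Bigl|\sum_{n\le N}\mu(n)a_n(\lambda-\omega(n))\Bigr|\le N^{1/2}\,(CN\lambda)^{1/2}=C^{1/2}N\lambda^{1/2}.
\]
Hence $\lambda|S_N|\le C^{1/2}N\lambda^{1/2}+|\Sigma|$, reducing the problem to bounding $|\Sigma|$.

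Next I would extract the prime-pair correlations from $\Sigma$ by a second Cauchy--Schwarz in the free variable $\ell$. Interchanging the order of summation writes $\Sigma=-\sum_\ell\mu(\ell)c_\ell$ with $c_\ell:=\sum_{p\in\mathcal P,\,p\le N/\ell,\,p\nmid\ell}a_{p\ell}$, so $|\Sigma|^2\le N\sum_{\ell\le N}|c_\ell|^2$, and expanding the square separates a diagonal from the correlations:
\[
\sum_{\ell}|c_\ell|^2=\sum_{p,q\in\mathcal P}\sum_{\ell}a_{p\ell}\overline{a_{q\ell}}\le N\lambda+\sum_{\substack{p,q\in\mathcal P\\ p\ne q}}\Bigl|\sum_{\ell}a_{p\ell}\overline{a_{q\ell}}\Bigr|,
\]
where the diagonal contributes at most $\sum_{p\in\mathcal P}N/p=N\lambda$ and the off-diagonal is precisely the family of correlation sums in the theorem. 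Under the hypothesis that each $\frac1N\sum_{n\le N}a_{pn}\overline{a_{qn}}\to0$, the off-diagonal is $o(N)$ for the fixed finite set $\mathcal P$, so $\sum_\ell|c_\ell|^2\le N\lambda+o(N)$ and therefore $\limsup_N|\Sigma|/N\le\lambda^{1/2}$. Combining with the previous step gives $\limsup_N|S_N|/N\le(C^{1/2}+1)\lambda^{-1/2}$.

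Finally, since by Mertens' theorem $\lambda=\sum_{p\in\mathcal P}1/p$ can be made as large as we wish by enlarging $\mathcal P$, this bound forces $\limsup_N|S_N|/N=0$ as soon as all prime-pair correlations over every such $\mathcal P$ vanish. Contrapositively, if \eqref{EqSarnak} fails then for some sufficiently large $\mathcal P$ at least one pair $p\ne q\in\mathcal P$ must have non-vanishing correlation, and after conjugating and relabeling we may take $p>q$, which is the stated conclusion. The only genuinely delicate point --- and the step I expect to be the main obstacle --- is the uniform control of error terms: the Tur\'an--Kubilius constant $C$ must be independent of $\mathcal P$, and the truncation and coprimality defects absorbed into the diagonal in the second step must be bounded by $O(N)$ with a constant that does not grow with $\lambda$, so that they survive division by $\lambda$ and do not spoil the decay as $\lambda\to\infty$.
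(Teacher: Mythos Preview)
The paper does not prove this theorem: it quotes it as a known criterion, citing K\'atai and Bourgain--Sarnak--Ziegler, and then immediately applies it. So there is no ``paper's proof'' to compare against; what you have written is essentially a reconstruction of K\'atai's original argument, and it is the right one.

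Your sketch is sound. The one place that deserves a word is exactly the spot you flagged at the end. If you keep the coprimality constraint $p\nmid\ell$ inside $c_\ell$ all the way through, then after expanding $\sum_\ell|c_\ell|^2$ the off-diagonal terms still carry constraints $p\nmid\ell$, $q\nmid\ell$, and stripping those by inclusion--exclusion produces auxiliary correlations of the shape $\sum_m a_{p^2m}\overline{a_{pqm}}$, which are \emph{not} covered by the prime-pair hypothesis. The clean fix is to drop the coprimality constraint one step earlier: since $\mu(p\ell)=0$ when $p\mid\ell$, one has
\[
\sum_{p\in\mathcal P}\sum_{\substack{\ell\le N/p\\ p\nmid\ell}}\mu(\ell)a_{p\ell}
=\sum_{p\in\mathcal P}\sum_{\ell\le N/p}\mu(\ell)a_{p\ell}+O\Bigl(N\sum_{p\in\mathcal P}\frac1{p^2}\Bigr)
=\sum_{p\in\mathcal P}\sum_{\ell\le N/p}\mu(\ell)a_{p\ell}+O(N),
\]
with an implied constant independent of $\mathcal P$ because $\sum_p 1/p^2$ converges. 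Now $c_\ell=\sum_{p\in\mathcal P,\,p\le N/\ell}a_{p\ell}$ has no coprimality constraint, the off-diagonal terms are exactly $\sum_{\ell\le N/\max(p,q)}a_{p\ell}\overline{a_{q\ell}}$, each of which is $o(N)$ by hypothesis, and the rest of your argument goes through verbatim to give $\limsup_N|S_N|/N\le (C^{1/2}+1)\lambda^{-1/2}+O(\lambda^{-1})$. The Tur\'an--Kubilius constant is indeed absolute, so letting $\lambda\to\infty$ finishes the contrapositive.
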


By this criterion, for a pair of distinct primes $p>q$, 
\begin{equation}\label{EqBilinear}\frac1N\sum_{n=1}^Nf(T^{pn}\bfx_0)\overline{f(T^{qn}\bfx_0)}\not\to 0 \text{ as }N\to\infty.\end{equation}

We study the dynamics of the pair $(T^{pn}\bfx_0, T^{qn}\bfx_0)$.

\begin{lem}\label{LemInvSubgp}
The set $G_1 = \{(x_1,y_1,z_1,x_2,y_2,z_2) \ | \ q(x_1,y_1)=p(x_2,y_2) \}  \subseteq G^2$ is a closed subgroup of $G^2$ with the following properties:
\begin{enumerate}
\item $G_1 / \Gamma_1$, where $\Gamma_1 = G_1 \cap (\Gamma \times \Gamma
)$, is compact.
\item $(T^{pn}\bfx_0, T^{qn}\bfx_0) \in X_1 = G_1 / \Gamma_1$ for all $n$.
\end{enumerate}
\end{lem}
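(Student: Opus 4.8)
The plan is to dispatch the three assertions in order of increasing difficulty: that $G_1$ is a closed subgroup, that the orbit pair lands in $X_1$, and that $G_1/\Gamma_1$ is compact.

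For the subgroup claim I would realize $G_1$ as a kernel. Let $\mathrm{ab}\colon G\to(\bR^2,+)$ be the abelianization $(x,y,z)\mapsto(x,y)$; the group rule \eqref{EqGroupRule} shows that the $(x,y)$-coordinates simply add, so $\mathrm{ab}$ is a continuous homomorphism. Define $\phi\colon G^2\to\bR^2$ by $\phi(g_1,g_2)=q\,\mathrm{ab}(g_1)-p\,\mathrm{ab}(g_2)$. This is a continuous homomorphism, and $G_1=\ker\phi$, so $G_1$ is automatically a closed normal subgroup; the two independent linear conditions make it $4$-dimensional.

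Property (2) then becomes essentially a one-line computation. Writing $T^n\bfx_0=g_n\Gamma$ with $g_n=(\alpha,\beta,h(\mathrm{ab}(g_{n-1})))\,g_{n-1}$, applying $\mathrm{ab}$ and inducting gives $\mathrm{ab}(g_n)=(n\alpha,n\beta)$ (the vertical coordinate plays no role). Hence $q\,\mathrm{ab}(g_{pn})=(pqn\alpha,pqn\beta)=p\,\mathrm{ab}(g_{qn})$, so the specific lift $(g_{pn},g_{qn})$ already lies in $G_1$. As the canonical injection $G_1/\Gamma_1\hookrightarrow G^2/(\Gamma\times\Gamma)$ identifies $X_1$ with the image of $G_1$, we conclude $(T^{pn}\bfx_0,T^{qn}\bfx_0)\in X_1$.

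The real content is property (1), and here I would exploit the central exact sequence $1\to Z\times Z\to G_1\xrightarrow{\pi}\bR^2\to 1$, where $\pi(g_1,g_2)=\mathrm{ab}(g_1)$ and $Z=\{(0,0,z):z\in\bR\}$ is the center of $G$, so the kernel $Z\times Z\cong\bR^2$ consists of the two central $z$-coordinates. The key step is to pin down $\Gamma_1$: since $\gcd(p,q)=1$, the constraint $q(a_1,b_1)=p(a_2,b_2)$ over $\bZ$ forces $(a_1,b_1)=p(k,l)$ and $(a_2,b_2)=q(k,l)$, giving $\Gamma_1=\{(pk,pl,c_1,qk,ql,c_2):k,l,c_1,c_2\in\bZ\}$. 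Thus $\pi(\Gamma_1)=(p\bZ)^2$ is a lattice in the base and $\Gamma_1\cap(Z\times Z)\cong\bZ^2$ is a lattice in the fiber. The quotient $G_1/\Gamma_1$ therefore fibers over the compact torus $\bR^2/(p\bZ)^2$ with compact fiber $(Z\times Z)/\bZ^2\cong\bT^2$, and so is compact. (Equivalently, $G_1$ is a simply connected nilpotent Lie group and $\Gamma_1$ a lattice, so Mal'cev's theorem yields cocompactness directly.) The one place calling for care is checking that $\pi(\Gamma_1)$ is discrete, so that $G_1/\Gamma_1\to\bR^2/\pi(\Gamma_1)$ is an honest fiber bundle; once that is in hand, compactness of base and fiber forces compactness of the total space, and no deeper obstacle arises---everything ultimately rests on the commutativity $qp=pq$.
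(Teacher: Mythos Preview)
Your argument is correct. The approach differs from the paper's in emphasis rather than in substance, so a brief comparison is worthwhile.

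For part (i), the paper simply observes that $G$ is a unipotent algebraic group defined over $\bQ$ with $\Gamma=G(\bZ)$, and that $G_1\subset G^2$ is cut out by rational linear equations; it then invokes the general fact that the integer points of a $\bQ$-defined unipotent subgroup form a cocompact lattice. Your route is more hands-on: you compute $\Gamma_1$ explicitly (using $\gcd(p,q)=1$), feed it into the central extension $1\to Z\times Z\to G_1\to\bR^2\to 1$, and read off cocompactness from compactness of base and fiber. This avoids any appeal to algebraic-group machinery and makes the structure of $\Gamma_1$ transparent, at the price of a little more computation; your parenthetical invocation of Mal'cev is an equally clean shortcut once $\Gamma_1\cong\bZ^4$ inside the $4$-dimensional $G_1$ is in hand.

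For part (ii), the paper argues slightly more abstractly: since $(\bfx_0,\bfx_0)$ is the identity coset and $T^p\times T^q$ visibly preserves the defining relation $q(x_1,y_1)=p(x_2,y_2)$, the subnilmanifold $X_1$ is $(T^p\times T^q)$-invariant, and the orbit of the identity stays in it. Your version---tracking $\mathrm{ab}(g_n)=(n\alpha,n\beta)$ and checking $q\cdot\mathrm{ab}(g_{pn})=p\cdot\mathrm{ab}(g_{qn})$---amounts to the same computation carried out on explicit lifts. Either way the content is, as you say, the identity $qp=pq$.
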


\begin{proof}(i) The nilpotent group $G$ is (the real points of) an algebraic group defined over $\bQ$ and thus so is $G^2$. The lattice $\Gamma$ is given by $G(\bZ)$. In order to show that $\Gamma_1$ is cocompact in $G_1$, it suffices to prove $G_1$ is a subgroup defined over $\bQ$. This is true by definition.

(ii) Notice that $(\bfx_0,\bfx_0)$ is the identity element in $X^2=G^2/\Gamma^2$. It suffices to show that the embedded subnilmanifold $X_1\subset X^2$ is $T^p\times T^q$-invariant. This can be verified from the definition of $G_1$, because $T^p$ adds $(p\alpha,p\beta)$ to the coordinate pair $(x_1,y_1)$ and  $T^q$ adds $(q\alpha,q\beta)$ to $(x_2,y_2)$.\end{proof}

\begin{lem}
For $D = \{(0,0,z_1,0,0,z_2 \ | \ z_1 = z_2 \} \subset G_1$ and $G_\ast = G_1 / D$, the subgroup $\Gamma_\ast = \Gamma_1 / \Gamma_1 \cap D$ is a cocompact lattice in $G_\ast$, and thus $X_\ast = G_\ast / \Gamma_\ast = X_1 / (D / \Gamma_1 \cap D)$ is a compact nilmanifold.
\end{lem}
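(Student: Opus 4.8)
The plan is to check that $D$ is a closed normal subgroup of $G_1$ which is rational with respect to the lattice $\Gamma_1$, after which the assertion becomes a formal consequence of the fibration theory for nilmanifolds. First I would observe that $D$ lies in the center of $G^2$: by \eqref{EqGroupRule} the center of $G$ is the vertical subgroup $\{(0,0,z):z\in\bR\}$, so the center of $G^2$ is $\{(0,0,z_1,0,0,z_2)\}$, and $D$ is the diagonal one-parameter subgroup inside it. In particular $D$ is central, hence a closed connected normal subgroup of $G_1$, so the quotient $G_\ast=G_1/D$ is a well-defined simply connected nilpotent Lie group (of dimension $\dim G_1-1=3$) and the projection $\pi\colon G_1\to G_\ast$ makes sense.

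Next I would pin down $\Gamma_1\cap D$. An element $(0,0,z,0,0,z)$ lies in $\Gamma\times\Gamma$ exactly when $z\in\bZ$, so $\Gamma_1\cap D=\{(0,0,m,0,0,m):m\in\bZ\}\cong\bZ$, a cocompact discrete subgroup of $D\cong\bR$. Equivalently, $D$ is cut out inside $G^2$ by the rational linear equations $x_1=y_1=x_2=y_2=0$ and $z_1=z_2$, so it is a subgroup defined over $\bQ$ in the sense of Lemma \ref{LemInvSubgp}; this is precisely the rationality of $D$, and it is the one point that really has to be verified.

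Granting this, the conclusion follows from the standard fibration theory for nilmanifolds: if $H$ is a closed connected normal subgroup of a simply connected nilpotent Lie group $N$ carrying a lattice $\Gamma$, then the fact that $\Gamma\cap H$ is a lattice in $H$ forces the image of $\Gamma$ in $N/H$ to be a lattice as well. Applying this with $N=G_1$, $\Gamma=\Gamma_1$, $H=D$, I obtain that $\Gamma_\ast=\Gamma_1 D/D\cong\Gamma_1/(\Gamma_1\cap D)$ is a \emph{discrete} subgroup of $G_\ast$. Its cocompactness is then immediate: $\pi$ induces a continuous surjection $G_1/\Gamma_1\to G_\ast/\Gamma_\ast$, and $G_1/\Gamma_1$ is compact by Lemma \ref{LemInvSubgp}(i). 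Hence $\Gamma_\ast$ is a cocompact lattice and $X_\ast=G_\ast/\Gamma_\ast$ is a compact nilmanifold.

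Finally, to identify $X_\ast$ with $X_1/(D/\Gamma_1\cap D)$, I would note that since $D$ is normal in $G_1$ its left-translation action descends to $X_1=G_1/\Gamma_1$ and factors through the circle $D/(\Gamma_1\cap D)\cong\bT^1$; using centrality of $D$, the orbit of $g\Gamma_1$ is $Dg\Gamma_1=gD\Gamma_1$, so the orbit space is $G_1/(D\Gamma_1)=(G_1/D)/\pi(\Gamma_1)=X_\ast$. The only substantive step is the rationality of $D$, equivalently that $\Gamma_1\cap D$ is a lattice in $D$; once that is in place, the discreteness of $\Gamma_\ast$, its cocompactness, and the quotient description are all routine.
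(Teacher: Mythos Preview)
Your proof is correct and follows essentially the same approach as the paper: you verify that $D$ is central in $G_1$ (so the quotient is a group) and that $D$ is defined over $\bQ$, then invoke the standard structure theory of lattices in nilpotent groups. The paper's own proof is a one-line appeal to the same two facts; your version simply spells out the details (the computation of $\Gamma_1\cap D$, the surjection $G_1/\Gamma_1\to G_\ast/\Gamma_\ast$, and the identification with $X_1/(D/\Gamma_1\cap D)$) that the paper leaves implicit.
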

\begin{proof}Remark first that $D$ is in the center of $G_1$, so $G_\ast$ is a group. Again, it suffices to notice that $D$ is an algebraic subgroup of the nilpotent group $G_1$ defined over $\bQ$.\end{proof}

We now describe the natural projection from $X_1$ to $X_\ast$. Because of the definition of $G_1$, each point in $G_1$ can be uniquely written as $(px,py,z_1,qx,qy,z_2)\in G^2$ for some $x,y,z_1,z_2\in\bR$, where $G$ is parametrized as in \eqref{EqGroup}. The $D$-orbit of this point is the set $\{(px,py,z_1+a,qx,qy,z_2+a):a\in\bR\}$. So $G_\ast=G_1/D$ can be parametrized by $\{(x,y,z):x,y,z\in\bR\}$, and the projection $\pi:G_1\to G_\ast$ is given by
\begin{equation}\label{EqProjection}\pi(px,py,z_1,qx,qy,z_2) = (x,y,z_1-z_2).\end{equation}

Because $p,q$ are distinct primes, each point in $\Gamma_1=G_1\cap\Gamma$ can be uniquely written as $(px,py,z_1,qx,qy,z_2)\in G^2$ for some $x,y,z_1,z_2\in\bZ$. Combining this with \eqref{EqProjection}, we see that $\Gamma_\ast=\pi(\Gamma_1)$ is just the set of integer points $\{(x,y,z)\in G_\ast:x,y,z\in\bZ\}$ of $G_\ast$.

\begin{lem}\label{LemNewGroupRule}
The group rule in $G_\ast$, which we denote by $\ast$, is given by 
\begin{align*}
(x,y,z) \ast (x',y',z') = (x+x', y+y', z+z' + (p^2-q^2)(xy'-x'y)).
\end{align*}
\end{lem}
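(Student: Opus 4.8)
The plan is to exploit that $G_\ast=G_1/D$ with $\pi$ the quotient homomorphism, so the operation $\ast$ is forced by the requirement that $\pi$ be a group homomorphism, i.e.\ $\pi(g)\ast\pi(g')=\pi(gg')$. Since, as noted before \eqref{EqProjection}, every element of $G_1$ has the canonical form $(px,py,z_1,qx,qy,z_2)$, it suffices to multiply two such elements coordinatewise in $G^2$, confirm the product remains in $G_1$, and then apply the projection formula \eqref{EqProjection}. The previous lemma already guarantees that $D$ is central in $G_1$, so $\pi$ is genuinely a homomorphism and $\ast$ is well defined.

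First I would take $g=(px,py,z_1,qx,qy,z_2)$ and $g'=(px',py',z_1',qx',qy',z_2')$ and compute $gg'$ coordinatewise in $G^2=G\times G$ using \eqref{EqGroupRule}. In the first $G$-factor the group rule gives third coordinate $z_1+z_1'+\big((px)(py')-(px')(py)\big)=z_1+z_1'+p^2(xy'-x'y)$, while the first two coordinates become $p(x+x')$ and $p(y+y')$. Likewise the second $G$-factor produces third coordinate $z_2+z_2'+q^2(xy'-x'y)$ together with spatial coordinates $q(x+x')$ and $q(y+y')$. Hence $gg'$ is again of the canonical form, now with parameters $x+x'$, $y+y'$ and vertical pair $\big(z_1+z_1'+p^2(xy'-x'y),\,z_2+z_2'+q^2(xy'-x'y)\big)$; this simultaneously confirms closure of $G_1$ and shows that the $x,y$-parameters simply add.

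Applying $\pi$ and recording only the difference of the two vertical coordinates then yields
\[
\pi(gg')=\Big(x+x',\,y+y',\,(z_1-z_2)+(z_1'-z_2')+(p^2-q^2)(xy'-x'y)\Big).
\]
Since $\pi(g)=(x,y,z_1-z_2)$ and $\pi(g')=(x',y',z_1'-z_2')$, writing $z=z_1-z_2$ and $z'=z_1'-z_2'$ produces exactly the claimed formula. The computation is entirely routine; the only points demanding care are the bookkeeping of the commutator sign in $xy'-x'y$ and the consistency of the canonical parametrization under multiplication. I do not expect any genuine obstacle beyond this careful tracking of coordinates.
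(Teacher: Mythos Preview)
Your proof is correct and follows essentially the same approach as the paper: multiply two canonical-form elements of $G_1$ coordinatewise in $G^2$ using \eqref{EqGroupRule}, then apply the projection $\pi$ from \eqref{EqProjection}. The paper's proof is slightly terser but the computation is identical.
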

\begin{proof}
The group rule in $G_1$ is
\begin{align*}
(px,&py,z_1,qx,qy,z_2)(px',py',z_1',qx',qy',z_2') = \\
&(p(x+x'),p(y+y'),z_1+z_1'+p^2(xy'-x'y),\\
&q(x+x'),q(y+y'),z_2+z_2'+q^2(xy'-x'y)).
\end{align*}
Applying $\pi$ to both sides, we get the formula in the lemma.
\end{proof}

It is not hard to see that the 2-step compact nilmanifold $X_\ast=G_\ast/\Gamma_\ast$, similar to the Heisenberg nilmanifold $X=G/\Gamma$, is a principal $\bT^1$-bundle over $\bT^2$. The base $\bT^2$ is parametrized by the first two coordinates $(x,y)$.

We indifferently denote by $\pi$ the projection from $X_1$ to $X_\ast$, which is induced from $\pi:G_1\to G_\ast$. By Lemma \ref{LemInvSubgp}, for all $n$ we have a point $\pi (T^{pn}\bfx,T^{qn}\bfx) \in X_\ast$.

The group $G_\ast$ acts by left translation on $X_\ast/\Gamma_\ast$. We keep the symbol $\ast$ to denote this action. It should be noted that, as $\pi: G_1\to G_\ast$ is a group morphism, for $g\in G_1$ and $\overline{\bfx}\in X_1$, $\pi g\ast\pi \overline{\bfx}=\pi(g\ast\overline{\bfx})$.

To proceed, we will need an expression for the $n$-th iterate $T^n$ for $n\in\bN$.
\begin{lem}\label{LemIterate} Let 
$h_n(x,y)=\sum_{i=0}^{n-1}h(x+i\alpha,y+i\beta)$. Then for $\bfx=(x,y,z)\Gamma\in X$ and $n\in\bN$, $$T^n\bfx=\big(x+n\alpha,y+n\beta,h_n(x,y)\big)\bfx.$$\end{lem}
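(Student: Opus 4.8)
The plan is to argue by induction on $n$, accumulating the translation elements one factor at a time via the group rule \eqref{EqGroupRule}. The base case $n=1$ is immediate: since $h_1(x,y)=h(x,y)$, the claimed formula is exactly the definition \eqref{EqMap2} of $T$, namely $T\bfx=(\alpha,\beta,h(x,y))\bfx$.

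For the inductive step, suppose $T^n\bfx=(n\alpha,n\beta,h_n(x,y))\bfx$. The point to notice first is that the base $\bT^2$-coordinates of $T^n\bfx$ are $(x+n\alpha,y+n\beta)$, because the factor $T_0$ is translation by $(\alpha,\beta)$. Hence applying $T$ once more, and reading the value of $h$ at the base point of $T^n\bfx$, gives $T^{n+1}\bfx=T(T^n\bfx)=\big(\alpha,\beta,h(x+n\alpha,y+n\beta)\big)(n\alpha,n\beta,h_n(x,y))\bfx$. It then remains to multiply the two leftmost group elements using \eqref{EqGroupRule}: the first two coordinates add to $\big((n+1)\alpha,(n+1)\beta\big)$, while the third becomes $h(x+n\alpha,y+n\beta)+h_n(x,y)+\big(\alpha\cdot n\beta-n\alpha\cdot\beta\big)$. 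Recognizing $h(x+n\alpha,y+n\beta)+h_n(x,y)=h_{n+1}(x,y)$ from the definition of $h_n$ then closes the induction.

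The computation is otherwise routine, so the only step worth flagging is the behavior of the central term. The symplectic cross-term $\alpha\cdot n\beta-n\alpha\cdot\beta$ produced by the Heisenberg multiplication vanishes identically, precisely because every translation performed by an iterate of $T$ moves the base point in the single fixed direction $(\alpha,\beta)$, so the accumulated vector $(n\alpha,n\beta)$ is parallel to the new increment $(\alpha,\beta)$. This parallelism is exactly what prevents any area-type correction from appearing at the vertical (fiber) level, and it is the reason the fiber displacement is expressible as the plain Birkhoff-type sum $h_n$ rather than as a sum decorated with commutator terms.
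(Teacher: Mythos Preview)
Your proof is correct and follows essentially the same approach as the paper: induction on $n$, applying $T$ once more at the base point $(x+n\alpha,y+n\beta)$, multiplying the two leftmost group elements via \eqref{EqGroupRule}, and observing that the symplectic cross term $\alpha\cdot n\beta-n\alpha\cdot\beta$ vanishes. The only cosmetic difference is that the paper starts the induction at $n=0$ (where $h_0=0$ and the identity acts trivially) rather than at $n=1$.
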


\begin{proof}Because $T$ factors to $T_0$ on $\bT^2$, the projection of $T^n\bfx$  to $\bT^2$ is represented by $(x+n\alpha,y+n\beta)$. Thus $T^{n+1}\bfx=\big(\alpha,\beta,h(x+n\alpha,y+n\beta)\big)\cdot T^n\bfx$.

When $n=0$, the equality in the lemma automatically holds as $h_0(x,y)=0$. Suppose the lemma is true for $n$, then \begin{align*}&T^{n+1}\bfx\\=&\big(\alpha,\beta,h(x+n\alpha,y+n\beta)\big)\big(n\alpha,n\beta,h_n(x,y)\big)\bfx\\
=&\big((n+1)\alpha,(n+1)\beta,h_n(x,y)+h(x+n\alpha,y+n\beta)+\alpha\cdot n\beta-\beta\cdot n\alpha\big)\bfx\\
=&\big((n+1)\alpha,(n+1)\beta,h_{n+1}(x,y)\big)\bfx\end{align*} by the group rule \eqref{EqGroupRule}. This establishes the lemma by induction.\end{proof}

Given the functions $h_n$ in Lemma \ref{LemIterate}, we can define a piecewise continuous function $H:\bT^2\mapsto\bT^1$ by \begin{equation} H(x,y)=h_p(px,py) - h_q(qx,qy)\end{equation} on $\bT^2$.

\begin{cor}\label{CorJoiningDyn}
$\pi\circ(T^p \times T^q) = T_\ast \circ \pi$, where $$T_\ast \bfx_\ast = (\alpha , \beta , H(x,y)) \ast \bfx_\ast$$ if $\bfx_\ast\in X_\ast$ is the equivalence class containing $(x,y,z)\in G_\ast$.
\end{cor}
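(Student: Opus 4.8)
The plan is to verify the intertwining identity pointwise by a direct computation in $G_1$ and $G_\ast$, paying particular attention to the central coordinate. Take a point $\overline{\bfx}\in X_1$ represented by $(px,py,z_1,qx,qy,z_2)\in G_1$, so that by \eqref{EqProjection} the image $\pi(\overline{\bfx})$ is represented by $(x,y,z_1-z_2)\in G_\ast$.

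First I would compute the left-hand side $\pi\circ(T^p\times T^q)(\overline{\bfx})$. By Lemma \ref{LemIterate}, $T^p$ acts on the first $G$-factor $(px,py,z_1)$ by left translation by $(p\alpha,p\beta,h_p(px,py))$, and $T^q$ acts on the second factor $(qx,qy,z_2)$ by left translation by $(q\alpha,q\beta,h_q(qx,qy))$. Carrying out these two products via the Heisenberg rule \eqref{EqGroupRule}, the first factor becomes $(p(x+\alpha),p(y+\beta),z_1+h_p(px,py)+p^2(\alpha y-x\beta))$ and the second becomes $(q(x+\alpha),q(y+\beta),z_2+h_q(qx,qy)+q^2(\alpha y-x\beta))$. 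The first two slots of each factor still scale by $p$ and by $q$ respectively, so the resulting point lies in $G_1$ and is in the standard form expected by $\pi$; applying \eqref{EqProjection} and using $H(x,y)=h_p(px,py)-h_q(qx,qy)$ yields the representative
\[
\big(x+\alpha,\ y+\beta,\ (z_1-z_2)+H(x,y)+(p^2-q^2)(\alpha y-x\beta)\big)
\]
in $G_\ast$.

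Next I would compute the right-hand side $T_\ast\circ\pi(\overline{\bfx})=(\alpha,\beta,H(x,y))\ast(x,y,z_1-z_2)$ directly from the twisted group law of Lemma \ref{LemNewGroupRule}. This produces
\[
\big(x+\alpha,\ y+\beta,\ (z_1-z_2)+H(x,y)+(p^2-q^2)(\alpha y-x\beta)\big),
\]
which coincides with the expression just obtained for the left-hand side. Since $\overline{\bfx}$ was arbitrary, this establishes $\pi\circ(T^p\times T^q)=T_\ast\circ\pi$.

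The computation is entirely mechanical, so there is no genuine obstacle; the only point needing care is the bookkeeping of the central coordinate. The commutator terms produced separately in the two Heisenberg factors, $p^2(\alpha y-x\beta)$ and $q^2(\alpha y-x\beta)$, enter $\pi$ with opposite signs and combine to $(p^2-q^2)(\alpha y-x\beta)$, which is exactly the commutator contribution generated by the rescaled law $\ast$ when one left-translates by $(\alpha,\beta,H(x,y))$. The agreement of these two independently computed central terms is no accident: it reflects that $\pi:G_1\to G_\ast$ is a group homomorphism, as already noted in the text, so that left translation on $X_1$ by any element descends to $\ast$-translation on $X_\ast$ by its $\pi$-image. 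Indeed, once one observes that $(T^p\times T^q)$ restricted to $X_1$ is left translation by the element of $G_1$ whose $\pi$-image is $(\alpha,\beta,H(x,y))$, the corollary follows formally from this homomorphism property, and the explicit computation above simply confirms the identification of $H$.
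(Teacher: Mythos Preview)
Your proof is correct and follows essentially the same approach as the paper: both start from a representative $(px,py,z_1,qx,qy,z_2)\in G_1$, invoke Lemma~\ref{LemIterate} to identify the translating element, and use \eqref{EqProjection} to land in $G_\ast$. The only difference is cosmetic: the paper immediately applies the homomorphism property $\pi(g\cdot\overline{\bfx})=\pi(g)\ast\pi(\overline{\bfx})$ to conclude in one line, whereas you expand the Heisenberg products explicitly on both sides and then observe the homomorphism shortcut afterwards.
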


We remark that here, as in \eqref{EqMap2}, $(\alpha,\beta,H(x,y))$ should be viewed as an element of the group $G_\ast/C_\ast$ where $C_\ast=\{(0,0,m)\in G_\ast: m\in\bZ\}$. For different choices of $\tH(x,y)\in\bR$ lifting $H(x,y)\in\bT^1$, $(\alpha , \beta , \tH(x,y))$ differ by a defect in $C_\ast$. As $C_\ast$ is both in the cetner of $G_\ast$ and in $\Gamma_\ast$, this defect does not affect the position of $(\alpha , \beta , \tH(x,y)) \ast \bfx_\ast$. So we can write $H$ instead of $\tH$ in Corollary \ref{CorJoiningDyn}.

\begin{proof} Suppose $\overline{\bfx}\in X_1$ is represented by $(px,py,z_1,qx,qy,z_2)\in G_1$. By Lemma \ref{LemIterate} and formula \eqref{EqProjection}
\begin{align*}
\pi((T^p \times T^q)\overline{\bfx} ) =& \pi ((p\alpha, p \beta, h_p(px,py), q \alpha, q \beta, h_q(qx,qy))\cdot\overline{\bfx}) \\
=& \pi ((p\alpha, p \beta, h_p(px,py), q \alpha, q \beta, h_q(qx,qy))) \ast \pi \overline{\bfx} \\
=& (\alpha , \beta ,  H(x,y)) \ast \pi \overline{\bfx}.
\end{align*} The corollary is proved.
\end{proof}

Note that $T_\ast$ is a skew product map on $X_\ast$. It also descends to $T_0$ on $\bT^2$, and acts by rotations along the fiber direction. Hence, $T_\ast$ preserves the uniform probability measure $\rmm_{T_{X_\ast}}$.

We define $f_1$ on $X^2=G^2/ \Gamma^2$ (and thus on $X_1 \subseteq X^2$) by $f_1(\bfx_1 , \bfx_2) = f(\bfx_1)\overline{f}(\bfx_2)$. Because $f$ has vertical oscillation of frequency $\xi$, $f_1$ is invariant by $D$. Thus $f_1$ descends to a function $f_\ast$ on $X_\ast$.

\begin{lem}\label{LemZeroAvg} $\int_{X_\ast} f_\ast\d\rmm_{X_\ast}= 0$.\end{lem}
\begin{proof} Since $\xi \neq 0$, we have that
\begin{align*}&\int_{X_1} f_1\d\rmm_{X_1}\\
=&\int_{x,y,z_1,z_2\in[0,1)} f_1\big((px,py,z_1,qx,qy,z_2)\Gamma^2\big)\d x\d y\d z_1\d z_2\\
=&\int_{x,y\in[0,1)}\Big(\int_0^1f\big((px,py,z_1)\Gamma\big)\d z_1\Big)\Big(\int_0^1\overline{f}\big((qx,qy,z_2)\Gamma\big)\d z_2\Big)\d x\d y\\
=&\int_{x,y\in[0,1)}0\cdot 0\d x\d y =0.\end{align*} This implies the lemma, as $f_1$ and $\rmm_{X_1}$ respectively descend to $f_\ast$ and $\rmm_{X_\ast}$.\end{proof}

Let $\bfx_{0*}$ be the identity point $(0,0,0)\ast\Gamma_\ast$ in $X_\ast$. By Lemma \ref{LemInvSubgp} and Corollary \ref{CorJoiningDyn}, the average in \eqref{EqBilinear} can be formulated as 
\begin{equation}\label{EqProjectAvg}
\frac{1}{N}\sum_{n=1}^N f(T^{pn}\bfx_0) \overline{f}(T^{qn}\bfx_0) = 
\frac{1}{N}\sum_{n=1}^N f_\ast(T_\ast^n \pi(\bfx_0,\bfx_0))= 
\frac{1}{N}\sum_{n=1}^N f_\ast(T_\ast^n\bfx_{0*}).
\end{equation}

From this, we can conclude the analysis above by stating the following proposition.
\begin{prop}\label{PropNonUE}Under the hypotheses of this section, $T_\ast$ is not uniquely ergodic.\end{prop}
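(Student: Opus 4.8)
The plan is to prove the proposition by contradiction, exploiting the fact that unique ergodicity of a continuous transformation forces the Birkhoff averages of every continuous observable to converge \emph{uniformly} to the space average, and then colliding this with the non-convergence already recorded in \eqref{EqBilinear}. All the ingredients are in place: $T_\ast$ preserves the uniform probability measure $\rmm_{X_\ast}$, the observable $f_\ast$ descends continuously from $f_1$, Lemma \ref{LemZeroAvg} gives $\int_{X_\ast}f_\ast\,\d\rmm_{X_\ast}=0$, and \eqref{EqProjectAvg} identifies the orbit average of $f_\ast$ at the base point $\bfx_{0\ast}$ with the bilinear average in \eqref{EqBilinear}.

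Concretely, I would suppose for contradiction that $T_\ast$ is uniquely ergodic. Since $T_\ast$ already preserves $\rmm_{X_\ast}$, unique ergodicity forces $\rmm_{X_\ast}$ to be the unique $T_\ast$-invariant Borel probability measure on the compact nilmanifold $X_\ast$. I would then invoke the standard (Oxtoby) characterization of unique ergodicity: for a continuous self-map of a compact metric space possessing a single invariant measure $\nu$, the averages $\frac1N\sum_{n=1}^N g(T_\ast^n y)$ converge to $\int g\,\d\nu$ uniformly in $y$, for every continuous $g$. Applying this to $g=f_\ast$, $y=\bfx_{0\ast}$, and $\nu=\rmm_{X_\ast}$, and using $\int_{X_\ast}f_\ast\,\d\rmm_{X_\ast}=0$ from Lemma \ref{LemZeroAvg}, yields $\frac1N\sum_{n=1}^N f_\ast(T_\ast^n\bfx_{0\ast})\to 0$. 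Feeding this through \eqref{EqProjectAvg}, which rewrites the left side as $\frac1N\sum_{n=1}^N f(T^{pn}\bfx_0)\overline{f}(T^{qn}\bfx_0)$, directly contradicts \eqref{EqBilinear}. Hence $T_\ast$ cannot be uniquely ergodic.

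The step demanding the most care is checking the hypotheses of the Oxtoby criterion, namely that $T_\ast$ is a genuine continuous self-map of the compact space $X_\ast$ and that $f_\ast$ is continuous rather than merely measurable. Continuity of $f_\ast$ is safe: $f_1(\bfx_1,\bfx_2)=f(\bfx_1)\overline{f}(\bfx_2)$ is continuous and $D$-invariant, so it factors continuously through the quotient $X_\ast=X_1/(D/(\Gamma_1\cap D))$. For $T_\ast$ the point is more delicate, since its defining cocycle $H$ is only described as piecewise continuous; but continuity is nonetheless automatic from Corollary \ref{CorJoiningDyn}. Indeed $T_\ast\circ\pi=\pi\circ(T^p\times T^q)$ is continuous on $X_1$, and $\pi:X_1\to X_\ast$ is a quotient map by the action of the compact fiber group $D/(\Gamma_1\cap D)$, so the universal property of quotient maps makes $T_\ast$ continuous on $X_\ast$. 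With both continuity points secured, the contradiction closes, and no further quantitative input is required; the genuinely hard work lives in the earlier reduction and in the unique-ergodicity statement to be established separately.
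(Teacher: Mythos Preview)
Your proof is correct and follows essentially the same approach as the paper's: both argue by contradiction, noting that if $T_\ast$ were uniquely ergodic then $\rmm_{X_\ast}$ would be its unique invariant measure, so the Birkhoff averages of $f_\ast$ at $\bfx_{0\ast}$ would converge to $\int f_\ast\,\d\rmm_{X_\ast}=0$ by Lemma~\ref{LemZeroAvg}, contradicting \eqref{EqBilinear} via \eqref{EqProjectAvg}. Your additional care in verifying the continuity of $T_\ast$ and $f_\ast$ (needed to invoke the Oxtoby criterion at the specific point $\bfx_{0\ast}$) is a welcome precision that the paper leaves implicit.
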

\begin{proof}If $T_\ast$ is uniquely ergodic, its unique invariant probability measure must be $\rmm_{X_\ast}$. Then by Birkhoff ergodic theorem and Lemma \ref{LemZeroAvg}, the ergodic averages $\frac{1}{N}\sum_{n=1}^N f_\ast(T_\ast^n\omega_{0*})$ converges to $0$. This contradicts \eqref{EqBilinear}, because of \eqref{EqProjectAvg}.\end{proof}

\subsection{Unique ergodicity of the reduced joining dynamics} By the proposition above, in order to prove Theorem \ref{ThmMain}, it suffices to show
\begin{prop}\label{PropUE} $T_\ast$ is uniquely ergodic.\end{prop}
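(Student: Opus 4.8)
The plan is to prove unique ergodicity of the skew product $T_\ast$ on the nontrivial circle bundle $X_\ast$ over $\bT^2$ by combining the minimality of the base rotation $T_0$ with a spectral/Fourier analysis along the fiber direction. Since $T_\ast$ descends to $T_0$ on $\bT^2$ and acts by rotations along the $\bT^1$-fibers, it preserves $\rmm_{X_\ast}$; unique ergodicity will follow once I show this is the only invariant probability measure. The standard strategy for such isometric (circle-bundle) extensions is Furstenberg's criterion: $T_\ast$ is uniquely ergodic if and only if for every nonzero vertical frequency $k\in\bZ$, there is no measurable (equivalently, by regularity, continuous) eigenfunction on $X_\ast$ transforming by $e(kz)$ under the center $C_\ast$ that is invariant under $T_\ast$ up to a unimodular factor. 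Equivalently, I would decompose $L^2(X_\ast)$ into the vertical Fourier modes indexed by $k\in\bZ$, and show that $T_\ast$ has no nonconstant invariant distribution in any mode $k\neq 0$.

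The key steps, in order, are as follows. \emph{First}, I would fix a nonzero vertical frequency $k$ and consider a function $\phi$ on $X_\ast$ with vertical oscillation of frequency $k$, i.e. $\phi((0,0,z)\ast\bfx_\ast)=e(kz)\phi(\bfx_\ast)$; such a $\phi$ is determined by its values on a section over $\bT^2$, so it corresponds to a function $\psi(x,y)$ on $\bT^2$. \emph{Second}, I would compute the action of $T_\ast$ in this coordinate: using Corollary \ref{CorJoiningDyn} and the group rule of Lemma \ref{LemNewGroupRule}, translating by $(\alpha,\beta,H(x,y))$ produces both a base rotation by $(\alpha,\beta)$ and a fiber twist, so that the transfer/Koopman operator on the mode-$k$ component becomes $(\mathcal L_k\psi)(x,y)=e\!\big(k\,\omega(x,y)\big)\,\psi(x+\alpha,y+\beta)$ for an explicit phase $\omega$ coming from $H$ and from the cocycle term $(p^2-q^2)(xy'-x'y)$. \emph{Third}, I would suppose $T_\ast\phi=\lambda\phi$ for a measurable invariant mode (with $|\lambda|=1$) and derive, via the minimality of $T_0$ and ergodicity, that $|\psi|$ is constant, reducing the question to whether the phase cocycle $k\,\omega$ is an $L^2$-coboundary (cohomologous to a constant) over the rotation $T_0$. \emph{Finally}, I would show this cocycle is \emph{not} a coboundary for any $k\neq0$: here the crucial point is that the bundle $X_\ast$ is topologically nontrivial, so the winding/degree of the associated cocycle is a nonzero integer multiple of $(p^2-q^2)$, which obstructs being a coboundary. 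This is exactly the leverage the introduction advertises, namely that the twistedness of the topology forces unique ergodicity without any Diophantine hypothesis on $\alpha,\beta$.

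Concretely, to detect the obstruction I expect to use the Fourier expansion of $\psi$ in the $(x,y)$ variables and match coefficients against the rotation, showing that a nontrivial solution would force a divergent or inconsistent system of relations among the Fourier coefficients unless the degree term vanishes. An alternative and perhaps cleaner route is to invoke the K\'atai--Bourgain--Sarnak--Ziegler-flavored unique ergodicity criteria for nilmanifold skew products directly: since $X_\ast$ is itself a 2-step nilmanifold of Heisenberg type (by Lemma \ref{LemNewGroupRule}), I could appeal to the fact that a minimal nilsystem is uniquely ergodic, \emph{provided} I can upgrade minimality of $T_\ast$ from the minimality of $T_0$ together with the nonvanishing of the vertical degree. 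In that formulation the main work is to prove minimality of $T_\ast$ on $X_\ast$, after which unique ergodicity is automatic for nilsystems.

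The step I expect to be the main obstacle is the \emph{last} one: ruling out that the fiber cocycle $k\,\omega$ is a coboundary over $T_0$ for every nonzero $k$. The delicate aspect is that $\omega$ is only piecewise continuous (since $H=h_p(px,py)-h_q(qx,qy)$ involves liftings $\th$ of $h$ that jump across the integer grid), so the usual degree or winding-number argument must be carried out carefully with respect to these jumps, and one must verify that the net topological degree picks up the factor $(p^2-q^2)\neq 0$ rather than cancelling. Controlling these discontinuities is precisely where the Lipschitz hypothesis on $h$ enters, guaranteeing enough regularity to make the cohomological computation rigorous while the bundle's nontriviality supplies the nonzero obstruction.
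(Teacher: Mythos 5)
Your high-level strategy is the same as the paper's: reduce unique ergodicity of the circle extension $T_\ast$ to Furstenberg's criterion (no measurable solution $R$ of $R\circ T_0=R+k\omega$ for any $k\neq 0$, with the measurable-cocycle version being essential here since the trivialized cocycle is only piecewise continuous), and then kill such solutions by a winding/degree obstruction coming from the nontriviality of the bundle. The paper does exactly this: it trivializes $X_\ast$ by a piecewise continuous bijection onto $\bT^3$, obtaining a skew product with cocycle $H'$, and then proves Lemma \ref{LemMultiCocycle} by an argument modeled on Furstenberg's Lemma 2.2. So the skeleton of your proposal is right.

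The genuine gap is in the decisive step, which you leave as a sketch with two proposed mechanisms, neither of which works as stated. First, matching Fourier coefficients of $\psi$ against the rotation fails because the relevant equation is $e(R(x+\alpha,y+\beta))=e(kH'(x,y))\,e(R(x,y))$ with a \emph{non-constant} multiplier $e(kH')$: the Fourier coefficients of $e(R)$ do not decouple mode by mode, and $R$ is only measurable, so there is no decay to exploit. The paper instead uses Luzin's theorem to find a compact set of measure $>1-\delta_1$ on which $R$ is uniformly continuous, passes to an iterate $n$ with $(\{n\alpha\},\{n\beta\})$ small, deduces that $kH_n'$ lies within $\tfrac13$ of an integer off a set of measure $O(\delta_1)$, and then compares the total variation of the continuous lift $F_n$ across a unit $x$-interval, namely $nk(p^2-q^2)d_1-nk(p^2-q^2)\beta-\lfloor n\beta\rfloor$, which grows linearly in $n$, against the Lipschitz bound $O(n\delta_1)$ on the image of the exceptional set; choosing $\delta_1$ small in terms of $k,p,q,L$ gives the contradiction. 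Note also that your description of the obstruction as ``a nonzero integer multiple of $(p^2-q^2)$'' is off: the winding of $F_n$ is \emph{not} an integer, and its non-vanishing comes from the irrationality of $\beta$ together with $k(p^2-q^2)+1\neq 0$ (the $\beta$-terms are precisely the contribution of the bundle's twist), which is why the argument needs no assumption that $h$ has nonzero degree. Finally, your alternative route via ``minimal nilsystems are uniquely ergodic'' is not available: $T_\ast$ is a skew product with a general Lipschitz cocycle $H$, not a translation on the nilmanifold $X_\ast$, so it is not a nilsystem and that theorem does not apply.
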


In \cite{F61}, Furstenberg proved that the unique ergodicity for a skew product map on a circle bundle over a uniquely ergodic base that acts as rotations on the fibers is equivalent to the non-existence of invariant multi-valued graphs. He originally stated this criterion for skew products generated by a continuous cocycle. The same proof also works for measurable cocycles, which is the statement we will need (Theorem \ref{ThmFurst} below). For completeness' sake, we include the proof here.

\begin{thm}\label{ThmFurst}
Let $(\Omega_0,T_0)$ be a uniquely ergodic topological dynamical system, whose unique invariant probability measure is denoted by $\gamma_0$. Take $\Omega=\Omega_0\times \bT^1$ and define a skew product map $T:\Omega \to \Omega$ by $T(\omega_0,\zeta)=(T_0 \omega_0,g(\omega_0)+\zeta)$, where $g:\Omega_0\to \bT^1$ is a measurable function. Then:
\begin{enumerate}
\item The product measure $\gamma=\gamma_0\times\rmm_{\bT^1}$ is an invariant probability measure for $T$;
\item $T$ is uniquely ergodic if and only if for all $k\in \bN$, the equation 
\begin{equation}
\label{g condition}
R(T_0 \omega_0)=R(\omega_0)+k g (\omega_0)
\end{equation}
has no measurable solution $R:\Omega_0\to \bT^1$ modulo $\gamma_0$.
\end{enumerate}
\end{thm}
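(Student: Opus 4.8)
The plan is to treat the two parts separately, working with Fourier analysis along the circle fibers throughout.

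For (i), I would verify invariance of $\gamma=\gamma_0\times\rmm_{\bT^1}$ by testing against the family of functions $F(\omega_0,\zeta)=\phi(\omega_0)e(k\zeta)$ with $\phi\in C(\Omega_0)$ and $k\in\bZ$, whose linear span is dense in $C(\Omega)$. Since $F(T(\omega_0,\zeta))=\phi(T_0\omega_0)e(kg(\omega_0))e(k\zeta)$, integrating over $\zeta$ annihilates everything when $k\neq0$, so both $\int F\circ T\,d\gamma$ and $\int F\,d\gamma$ vanish; for $k=0$ invariance reduces to $T_0$-invariance of $\gamma_0$. Density then yields $T_\ast\gamma=\gamma$.

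For the easy direction of (ii)---that a solution forces non-unique ergodicity---suppose $R(T_0\omega_0)=R(\omega_0)+kg(\omega_0)$ holds $\gamma_0$-a.e. for some $k\in\bN$. I would check directly that the modulus-one function $F(\omega_0,\zeta)=e\big(k\zeta-R(\omega_0)\big)$ satisfies $F\circ T=F$ a.e.: the twist $kg(\omega_0)$ produced by the fiber rotation is exactly cancelled by $R(T_0\omega_0)-R(\omega_0)$. Since $\int_{\bT^1}F\,d\zeta=0$ while $|F|\equiv1$, this $F$ is non-constant, so $\gamma$ is not ergodic for $T$; as the invariant measure of a uniquely ergodic system is necessarily ergodic, $T$ cannot be uniquely ergodic.

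The substantive direction is the converse. Assume $T$ is not uniquely ergodic; since $\gamma$ is invariant by (i), there is then an invariant probability measure $\nu\neq\gamma$. Its projection to $\Omega_0$ is $T_0$-invariant, hence equals $\gamma_0$ by unique ergodicity of $T_0$, so $\nu$ disintegrates as $\nu=\int_{\Omega_0}\nu_{\omega_0}\,d\gamma_0(\omega_0)$ with each $\nu_{\omega_0}$ a probability measure on $\bT^1$. The $T$-invariance of $\nu$ says that $\nu_{T_0\omega_0}$ is the push-forward of $\nu_{\omega_0}$ under translation by $g(\omega_0)$; at the level of the fiber Fourier coefficients $a_k(\omega_0)=\int_{\bT^1}e(k\zeta)\,d\nu_{\omega_0}(\zeta)$ this reads $a_k(T_0\omega_0)=e(kg(\omega_0))\,a_k(\omega_0)$ for every $k\in\bZ$. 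Because $\nu\neq\gamma$, some $a_k$ with $k\neq0$ is not identically zero, and replacing $k$ by $|k|$ if necessary I may take $k\in\bN$. Then $|a_k|$ is $T_0$-invariant, so by ergodicity of $\gamma_0$ it equals a constant $c>0$ almost everywhere; writing $a_k=c\,e(\theta)$ for a measurable $\theta:\Omega_0\to\bT^1$ and comparing phases in the relation above gives $\theta(T_0\omega_0)=\theta(\omega_0)+kg(\omega_0)$, so that $R=\theta$ solves \eqref{g condition}.

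The main obstacle lies in this converse direction, in producing a genuine measurable solution rather than a merely formal one. The points requiring care are the existence and measurability of the disintegration $\{\nu_{\omega_0}\}$ and of the coefficient functions $a_k(\cdot)$, and above all the extraction of a measurable phase $\theta$ from $a_k$; the latter is possible precisely because $|a_k|$ is bounded away from zero, which is where the ergodicity of the uniquely ergodic base $(\Omega_0,T_0)$ enters decisively. The remaining steps are formal Fourier manipulation.
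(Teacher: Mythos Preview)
Your argument is correct, but the converse direction is organized differently from the paper's. The paper first isolates the intermediate equivalence ``$T$ is uniquely ergodic $\iff$ $\gamma$ is ergodic'': it observes that the fiber rotations $\tau_\beta$ preserve genericity for $\gamma$, so once $\gamma$ is ergodic, $\gamma_0$-almost every base point has \emph{every} fiber point generic for $\gamma$, ruling out a second ergodic measure. Only after this does the paper pass to Fourier analysis, expanding a non-constant $T$-invariant $L^2(\gamma)$ function as $\sum c_k(\omega_0)e(k\zeta)$ and reading off the cocycle equation from a non-zero coefficient $c_k$. You instead skip the generic-point step entirely: starting from an invariant $\nu\neq\gamma$, you disintegrate it over the base and run the Fourier argument on the fiber measures $\nu_{\omega_0}$ rather than on an invariant function. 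Your route is a bit more streamlined and avoids Birkhoff/generic-point considerations; the paper's route has the advantage of making explicit the general principle that for such circle extensions unique ergodicity and ergodicity of the product measure coincide. In both approaches the decisive use of ergodicity of $\gamma_0$ is the same: to upgrade ``$|c_k|$ (resp.\ $|a_k|$) is $T_0$-invariant'' to ``constant and positive,'' which is exactly what licenses the measurable extraction of the phase.
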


\begin{proof} The proof of Part (i) is straightforward, so we only discuss the second part.

The key claim is:

{\it\hskip2cm $T$ is uniquely ergodic if and only if $\gamma$ is ergodic.}

 To see this, define the transformation $\tau_\beta : \Omega \to \Omega$ by $\tau_\beta (\omega_0,\zeta)=(\omega_0,\beta+\zeta)$. Since $\gamma=\gamma_0\times\rmm_{\bT^1}$, if $\omega_\ast$ is a generic point for $(\Omega, T, \gamma)$, in the sense that $\frac1N\sum_{i=0}^{N-1}\delta_{T^i\omega_\ast}\to\gamma$ in the weak-$^*$ topology as $N\to\infty$, then so is $\tau_\beta (\omega_\ast)$ for every $\beta \in \bT^1$.

To show ergodicity implies unique ergodicity, suppose $T$ is ergodic with respect to $\gamma$. It follows that almost all points of $\Omega$ (with respect to $\gamma$) are generic for $(\Omega, T, \gamma)$. So $\gamma_0$-almost every $\omega_0\in\Omega_0$ has the property that $(\omega_0,\zeta)$ is generic for $(\Omega, T, \gamma)$ for $\rmm_{\bT^1}$-almost every $\zeta$. By applying $\tau_\beta$ for all $\beta\in \bT^1$, we see that for $\gamma_0$-almost every $\omega_0\in\Omega_0$,  $(\omega_0,\zeta)$ is generic for $(\Omega, T, \gamma)$ for all $\zeta\in \bT^1$.  Suppose $T$ is not uniquely ergodic, then there exists an ergodic probability measure $\gamma'$ other than $\gamma$ for $T$. As any $T$-invariant measure on $\Omega$ projects to an invariant measure on $\Omega_0$, and $(\Omega_0,T_0,\gamma_0)$ is uniquely ergodic, it follows that the projection of $\gamma'$ on $\Omega_0$ is $\gamma_0$. Thus for $\gamma_0$-almost all points $\omega_0$ in the base $\Omega_0$, there exist extended points $(\omega_0,\zeta)$ that are generic for $(\Omega, T, \gamma')$. This cannot happen though, since for almost every $\omega_0$ and all $\zeta$, $(\omega_0,\zeta)$ is generic for $\gamma$, which is different from $\gamma'$. This establishes the claim.

It remains to show that the ergodicity of $\gamma$ is equivalent to the condition in part (ii). 

Suppose first that $\gamma$ is not ergodic. Then $Tf=f$ has a non-constant solution $f\in L^2(\Omega, \gamma)$. Since $\gamma=\gamma_0\times m_{\mathbb{T}^1}$ is a product and $f$ is $L^2$ with respect to $\gamma$, we can split $f$ into Fourier series along the $\bT^1$ direction and write it as $$f= \sum_{-\infty}^\infty c_k(\omega_0)e(k\zeta),$$ where $c_k(\omega_0)\in L^2(\Omega_0,\gamma_0)$ and $e(\xi)=e^{2\pi i \xi}$. The condition $Tf=f$ implies $\sum_{-\infty}^\infty c_k(T_0\omega_\ast)e(kg(\omega_0)+k\zeta)=\sum_{-\infty}^\infty c_k(\omega_0)e(k\zeta)$, or \begin{equation}\label{EqPfFurst1}c_k(T_0\omega_0)e(kg(\omega_0))=c_k(\omega_0)\end{equation} for every $k\in\bZ$.

Since $T_0$ is ergodic, $f$ is not reducible to a function of $\omega_0$ alone and thus $c_k(\omega_0)\neq 0$ for at least one non-zero integer $k$. By the ergodicity of $T_0$ it follows that $c_k$ vanishes only on a set of measure zero, which allows us to write $c_k(\omega_0)$ as $r_k(\omega_0)e(\theta_k(\omega_0))$, where $r_k(\omega_0)>0$ and $\theta_k(\omega_0)\in \bT^1$. From \eqref{EqPfFurst1}, we get that $r_k(T_0\omega_0)e(\theta_k(T_0\omega_0)+kg(\omega_0))=r_k(\omega_0)e(\theta_k(\omega_0))$ for every $k$, thus $R(\omega_0)=-\theta_k(\omega_0)$ is a solution to \eqref{g condition}. In addition, if $k<0$, then we can replace $k$ with $-k$ and $R$ with $-R$. So one can claim $k\in\bN$ without loss of generality.

Conversely, if \eqref{g condition} has a solution, then the non-constant measurable function $e(-k\zeta)e(R(\omega_0))$ is invariant under $T$ modulo $\gamma$, implying that $\gamma$ is not ergodic, and we are done.\end{proof}

We now reparametrize $X_*$ in a piecewise continuous way in order to identify it with $\bT^3=\bT^2\times \bT^1$ and apply Theorem \ref{ThmFurst}.

In the parametrization given by Lemma \ref{LemNewGroupRule}, the box $[0,1)^3$ is a fundamental domain for the projection $G_\ast\to X_\ast$. Indeed, for each $(x,y,z)\in G_\ast$, there is a unique element of $\Gamma_\ast$, which we denote by $\lfloor(x,y,z)\rfloor$, such that $(x,y,z)\ast\lfloor(x,y,z)\rfloor^{-1}\in [0,1)^3$. Given the group rule \eqref{EqGroupRule}, it is not hard to check that
\begin{equation}\label{EqIntPart}\lfloor(x,y,z)\rfloor=(\lfloor x\rfloor, \lfloor y\rfloor, \lfloor z-(p^2-q^2)(x\lfloor y\rfloor-\lfloor x\rfloor y)\rfloor).\end{equation}

 Thus the map $\rho_0: X_\ast\to [0,1)^3$ given by  \begin{equation}\begin{aligned}\rho_0: (x,y,z)\Gamma\mapsto &(x,y,z)\ast\lfloor(x,y,z)\rfloor^{-1}\\
&=(\{x\},\{y\},\{ z-(p^2-q^2)(x\lfloor y\rfloor-\lfloor x\rfloor y)\})\end{aligned}\end{equation}
is bijective and provides a piecewise continuous parametrization of $X_\ast$ by $[0,1)^3$. Here $\{x\}$ stands for $x-\lfloor x\rfloor$, the fractional part of $x$.

If $\bfx_\ast\in X_\ast$ is represented by $(x,y,z)\in F$, then $T_\ast\bfx$ is represented by $\big(x+\alpha,y+\beta,z+(p^2-q^2)(\alpha y-\beta x)+H(x,y)\big)\in G_\ast$, and thus can also be represented by the element $$\begin{aligned}\Big(\{x+\alpha\},& \{y+\beta\},\Big\{z+H(x,y)\\
&+(p^2-q^2)\big((\alpha y-\beta x)-(x+\alpha)\lfloor y+\beta\rfloor+\lfloor x+\alpha\rfloor (y+\beta)\big)\Big\}\Big)\end{aligned}$$ in $[0,1)^3$.

If we identify $[0,1)^3$ with $\bT^3$ in the natural way, and let $\rho$ be the composition given by $X_\ast\overset{\rho_0}\to [0,1)^3\to\bT^3$, then $\rho$ is bijective and piecewise continuous. Moreover, the discussion above shows that $T_\ast$ is conjugate to the map 
\begin{equation}\label{EqTrivi}T_\ast':(x,y,z)\mapsto (x+\alpha,y+\beta, z+H'(x,y))\end{equation}
on $\bT^3$ by the piecewise continuous bijection $\rho$, where $H':\bT^2\to\bR$ is defined by \begin{equation}\begin{aligned}\label{EqTriviCocyle}H'(x,y)=&H(x,y)+(p^2-q^2)\big((\alpha y-\beta x)\\
&-(x+\alpha)\lfloor y+\beta\rfloor+\lfloor x+\alpha\rfloor (y+\beta)\big)\end{aligned}\end{equation} for $(x,y)\in[0,1)^2$ and regarded as a piecewise continuous map on $\bT^2$.

Therefore, in view of Theorem \ref{ThmFurst}, in order to obtain Proposition \ref{PropUE}, it suffices to show the following lemma:

\begin{lem}\label{LemMultiCocycle}For all $k\in \bN$, the equation 
\begin{equation}
\label{EqLemMultiCocycle}
R(x+\alpha,y+\beta)=R(x,y)+kH'(x,y)
\end{equation}
has no measurable solution $R:\bT^2\to \bT^1$ modulo $\rmm_{\bT^2}$. \end{lem}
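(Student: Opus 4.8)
The plan is to argue by contradiction and to extract the fact that the integer $p^2-q^2$ is nonzero, which is precisely the nontriviality of the circle bundle $X_\ast\to\bT^2$. Suppose that for some $k\geq 1$ there is a measurable solution $R:\bT^2\to\bT^1$ of \eqref{EqLemMultiCocycle}, and set $\Phi=e(R)$, so that $\Phi(x+\alpha,y+\beta)=e\bigl(kH'(x,y)\bigr)\,\Phi(x,y)$ with $|\Phi|=1$ almost everywhere. Writing $T_0$ for the rotation by $(\alpha,\beta)$ and iterating this identity gives, for every $N\in\bN$,
\[
\Phi\circ T_0^{\,N}=e\bigl(k\Sigma_N\bigr)\,\Phi,\qquad \Sigma_N=\sum_{j=0}^{N-1}H'\circ T_0^{\,j},
\]
so that $\Phi\circ T_0^{\,N}\cdot\overline{\Phi}=e(k\Sigma_N)$ is an honest $\bT^2$-periodic function and
\[
I_N:=\int_{\bT^2}e\bigl(k\Sigma_N\bigr)\,\d\rmm_{\bT^2}=\bigl\langle \Phi\circ T_0^{\,N},\Phi\bigr\rangle=\sum_{(m,n)\in\bZ^2}\bigl|\widehat{\Phi}(m,n)\bigr|^2\,e\bigl(N(m\alpha+n\beta)\bigr).
\]
The sequence $(I_N)$ is therefore nonnegative definite and almost periodic with $I_0=\|\Phi\|_2^2=1$; by Bohr recurrence the set $\{N:\operatorname{Re}I_N>\tfrac12\}$ has positive lower density, and in particular $\frac1M\sum_{N\le M}|I_N|$ stays bounded below by a positive constant. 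The goal is to contradict this by proving $\frac1M\sum_{N\le M}|I_N|\to 0$.

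The mechanism producing this smallness is the twist. From \eqref{EqTriviCocyle} and the group law of $G_\ast$ (Lemma \ref{LemNewGroupRule}), the cocycle $\Sigma_N$ splits, modulo the lattice corrections that make it $\bT^2$-periodic, as
\[
\Sigma_N(x,y)\equiv (p^2-q^2)\,N\,(\alpha y-\beta x)\;+\;\sum_{j=0}^{N-1}H\circ T_0^{\,j}(x,y)\pmod 1,
\]
the first summand being the contribution of the $2$-step nilpotent, topologically nontrivial part of $X_\ast$ and the second being the Birkhoff sum of the Lipschitz cocycle $H$. Since $|I_N|^2=\iint_{\bT^2\times\bT^2}e\bigl(k[\Sigma_N(u)-\Sigma_N(v)]\bigr)\,\d\rmm_{\bT^2}(u)\,\d\rmm_{\bT^2}(v)$, by Cauchy--Schwarz it suffices to show $\frac1M\sum_{N\le M}|I_N|^2\to0$, i.e., that for a.e.\ pair $(u,v)$ the averages $\frac1M\sum_{N\le M}e\bigl(k[\Sigma_N(u)-\Sigma_N(v)]\bigr)$ tend to $0$, after which one concludes by dominated convergence. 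The twist makes $\Sigma_N(u)-\Sigma_N(v)$, as a function of $N$, equal to $(p^2-q^2)\,N\,\omega(u,v)$ plus the difference of the two Birkhoff sums of $H$, where $\omega(u,v)$ is the difference of the linear forms $\alpha y-\beta x$ evaluated at $u$ and $v$; because $p^2-q^2\neq 0$ this slope is nonzero for a.e.\ $(u,v)$, and this is the engine of the cancellation.

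The main obstacle is exactly this last decay, to be established with no Diophantine hypothesis on $(\alpha,\beta)$. The inner average is an exponential sum in $N$ whose phase has the linearly growing twist slope $(p^2-q^2)\omega(u,v)$ perturbed by the Birkhoff sum of $H$. The delicate point is that the successive increments of this perturbation, $\tH(T_0^{\,N}u)-\tH(T_0^{\,N}v)$ for a lift $\tH$ of $H$, are bounded \emph{uniformly in $N$} by $\operatorname{Lip}(h)\,\|u-v\|$ precisely because $T_0$ acts isometrically---this is where Lipschitz continuity is used---so the phase increment stays controlled around the nonzero constant $(p^2-q^2)\omega(u,v)$, and the nonvanishing twist slope is what forces the required cancellation while the Lipschitz perturbation is prevented from conspiring against it. On the trivial bundle $\bT^2$ the analogue of $p^2-q^2$ is $0$, the twist slope vanishes, and this cancellation is unavailable, which is why that case needed analyticity and finer harmonic analysis; here the nonzero Chern number supplies the oscillation for free, so only Lipschitz regularity is required. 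Assembling these estimates yields $\frac1M\sum_{N\le M}|I_N|\to0$, contradicting the positive lower density coming from Bohr recurrence, and proves that no solution $R$ exists, establishing the lemma and hence Theorem \ref{ThmMain}.
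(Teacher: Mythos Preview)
Your reduction to the integrals $I_N=\int_{\bT^2}e(k\Sigma_N)\,\d\rmm_{\bT^2}$ and the observation that, assuming $R$ exists, $(I_N)$ is Bohr almost periodic with $I_0=1$ (so that $\{N:|I_N|>\tfrac12\}$ is syndetic) are correct. The problem lies entirely in your claimed proof that $\frac1M\sum_{N\le M}|I_N|^2\to 0$. You reduce this to showing that for almost every $(u,v)$ the average $\frac1M\sum_{N\le M}e\bigl(k[\Sigma_N(u)-\Sigma_N(v)]\bigr)$ tends to $0$, and you note that the successive phase increments are of the form $c(u,v)+r_N$ with $c(u,v)=k(p^2-q^2)\omega(u,v)\ne 0$ a.e.\ and $|r_N|\le C\|u-v\|$. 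But a bounded perturbation of a nonzero linear phase does \emph{not} force cancellation: if $c=\tfrac12$ and the increments alternate between $\tfrac14$ and $\tfrac34$, the partial sums are $0,\tfrac14,1,\tfrac54,2,\dots$ and $e(\Psi_N)$ cycles through $1,i,1,i,\dots$, with average $\tfrac{1+i}{2}\ne 0$. More generally, for any irrational $c$ one can choose $r_N\in\{-\{c\},1-\{c\}\}$ so that $\Psi_N\in\bZ$ for all $N$. Your $r_N=k[\tH(T_0^Nu)-\tH(T_0^Nv)]$ is of course not arbitrary, but nothing in your sketch exploits its structure; and since both $|c(u,v)|$ and your bound on $|r_N|$ scale like $\|u-v\|$ (the correct constant is $k(p^2+q^2)L$, not $k\,\mathrm{Lip}(h)$), you cannot make the perturbation relatively small by taking $u$ close to $v$. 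The sentence ``the Lipschitz perturbation is prevented from conspiring against it'' is exactly the heart of the lemma, and it is left unjustified. Indeed, your own Fourier expansion shows that \emph{under the contradiction hypothesis} one has $\frac1M\sum_{N\le M}|I_N|^2\to\sum_{m,n}|\widehat\Phi(m,n)|^4>0$, so the quantity you aim to drive to zero is, under your standing assumption, provably bounded away from zero; this confirms that the missing step is not a technicality.

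The paper's argument is rather different. It uses Luzin's theorem to find a compact set of measure $>1-\delta_1$ on which $R$ is uniformly continuous, and picks a single large return time $n$ with $\{n\alpha\},\{n\beta\}$ small, so that $\|kH_n'(x,y)\|<\tfrac13$ on a set of measure $>1-4\delta_1$. Lifting to a real-valued $F_n$ on $[0,1)^2$, one computes $F_n(1,y)-F_n(0,y)=nk(p^2-q^2)d_1-nk(p^2-q^2)\beta-\lfloor n\beta\rfloor$, whose absolute value grows like $n$ because $d_1\in\bZ$ while $\beta\notin\bQ$; this is where the nontrivial bundle enters. Along a good horizontal line the image under $F_n$ of the small bad set must cover all the gaps $[m+\tfrac13,m+\tfrac23]$ inside an interval of length $\sim n$, but the Lipschitz bound on $F_n$ (also $\sim n$) caps that image's measure by $\sim n\delta_1$. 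Choosing $\delta_1$ small enough (independently of $n$) yields a contradiction for large $n$.
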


Our approach to Lemma \ref{LemMultiCocycle} is inspired by \cite{F61}*{Lemma 2.2}

Notice first that, suppose $R(x,y)$ is such a solution, then the set $$\Lambda':=\{(x,y,z)\in\bT^3: kz=R(x,y)\},$$ which is a multi-valued graph over $\bT^2$,  is $T_\ast'$ invariant except for a $\rmm_{\bT^2}$-null set of $(x,y)$. Let $\Lambda=\rho^{-1}(\Lambda')\subset X_\ast$. Then $\Lambda$ intersects every $\bT^1$-fiber in exactly $k$ points that form a translate of $\frac1k\bZ/\bZ$. Moreover, $\Lambda$ is almost $T_\ast$-invariant, in the sense that there is a subset $A\subseteq\bT^2$ with $\rmm_{\bT^2}(A)=1$, such that if $\bfx_\ast\in\Lambda\cap\pi_{\bT^2}^{-1}(A)$, then $T_\ast\bfx_\ast\in\Lambda$.

\begin{lem}\label{LemJoiningIterate}For $\bfx_\ast=(x,y,z)\Gamma\in X_\ast$ and $n\in\bN$, $$T_\ast^n\bfx_\ast=\big(x+n\alpha,y+n\beta,H_n(x,y)\big)\bfx_\ast,$$ where $H_n(x,y)=\sum_{i=0}^{n-1}H(x+i\alpha,y+i\beta)$.\end{lem}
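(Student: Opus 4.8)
The plan is to establish the formula $T_\ast^n\bfx_\ast=\big(x+n\alpha,y+n\beta,H_n(x,y)\big)\bfx_\ast$ by induction on $n$, mirroring the computation already carried out for the original map $T$ in Lemma~\ref{LemIterate}. The base case $n=0$ holds trivially, since $H_0(x,y)=\sum_{i=0}^{-1}(\cdots)=0$ and the group element $(x,y,0)$ acting by $\ast$ on $\bfx_\ast=(x,y,z)\Gamma_\ast$ must be reconciled with the identity---here one should be slightly careful, because in Corollary~\ref{CorJoiningDyn} the translation is written as $(\alpha,\beta,H(x,y))\ast\bfx_\ast$ with the \emph{current} base coordinates feeding into $H$, so the $n=0$ statement reduces to the tautology $T_\ast^0\bfx_\ast=\bfx_\ast$.

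For the inductive step, I would assume the claim for $n$ and compute $T_\ast^{n+1}\bfx_\ast=T_\ast(T_\ast^n\bfx_\ast)$. First note that $T_\ast^n\bfx_\ast$ has base-$\bT^2$ coordinates $(x+n\alpha,y+n\beta)$, so by Corollary~\ref{CorJoiningDyn} applying $T_\ast$ one more time left-multiplies by $\big(\alpha,\beta,H(x+n\alpha,y+n\beta)\big)$. Thus
\begin{align*}
T_\ast^{n+1}\bfx_\ast&=\big(\alpha,\beta,H(x+n\alpha,y+n\beta)\big)\ast\big(n\alpha,n\beta,H_n(x,y)\big)\ast\bfx_\ast.
\end{align*}
The key computation is then to multiply the first two elements using the group rule $\ast$ from Lemma~\ref{LemNewGroupRule}. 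The first two coordinates add to $\big((n+1)\alpha,(n+1)\beta\big)$, and the third coordinate becomes
$$H(x+n\alpha,y+n\beta)+H_n(x,y)+(p^2-q^2)\big(\alpha\cdot n\beta-n\alpha\cdot\beta\big).$$
The cross term $(p^2-q^2)(\alpha\cdot n\beta-n\alpha\cdot\beta)$ vanishes identically, so the third coordinate collapses to $H_n(x,y)+H(x+n\alpha,y+n\beta)=H_{n+1}(x,y)$ by the definition of $H_n$. This establishes the claim for $n+1$ and closes the induction.

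The main subtlety---rather than a genuine obstacle---is bookkeeping around the $(p^2-q^2)$ twist in the group law of $G_\ast$. Unlike in Lemma~\ref{LemIterate}, the commutator contributes a factor $(p^2-q^2)$, and one must verify that the antisymmetry $\alpha(n\beta)-(n\alpha)\beta=0$ kills this contribution just as in the untwisted Heisenberg case; since the vanishing is independent of the coefficient, the twist plays no role in the final formula. A second point requiring care is that $H$ (and hence $H_n$) is only a piecewise-continuous $\bT^1$-valued function, so all the identities above should be read modulo the lift ambiguity living in $C_\ast$, which, as remarked after Corollary~\ref{CorJoiningDyn}, is central in $G_\ast$ and contained in $\Gamma_\ast$ and therefore does not affect the point represented in $X_\ast$.
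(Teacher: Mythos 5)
Your proof is correct and follows exactly the route the paper intends: the paper's proof of this lemma is the one-line remark that the argument of Lemma~\ref{LemIterate} goes through verbatim with the group rule $\ast$ of Lemma~\ref{LemNewGroupRule} in place of \eqref{EqGroupRule}, which is precisely the induction you carry out. Your observation that the $(p^2-q^2)$ twist is irrelevant because the antisymmetric term $\alpha\cdot n\beta-n\alpha\cdot\beta$ vanishes is the right (and only) point to check.
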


\begin{proof}The proof is the same as that of Lemma \ref{LemIterate}, using the new group rule $\ast$ in lieu of \eqref{EqGroupRule}.\end{proof}

Given $n\in\bN$, remark that $T_\ast^n$ is conjugate by $\rho$ to the $(T_\ast')^n$. Repeating the proof of \eqref{EqTrivi}, we can show similarly that 
\begin{equation}\label{EqIterTrivi}(T_\ast')^n(x,y,z)=(x+n\alpha,y+n\beta, z+H_n'(x,y))\end{equation}
on $\bT^3$, where $H_n':\bT^2\to\bR$ is defined by \begin{equation}\begin{aligned}\label{EqIterTriviCocyle}H_n'(x,y)=&H_n(x,y)+(p^2-q^2)\big((n\alpha y-n\beta x)\\
&-(x+n\alpha)\lfloor y+n\beta\rfloor+\lfloor x+n\alpha\rfloor (y+n\beta)\big).\end{aligned}\end{equation} for $(x,y)\in[0,1)^2$.

Because $\Lambda$ is almost $T_\ast$ invariant, it is also almost $T_\ast^n$ invariant. And $\Lambda'$ is almost $(T_\ast')^n$ invariant in the same sense, i.e. for a subset $A\subseteq\bT^2$ of full $\rmm_{\bT^2}$-measure, if $\bfx_\ast'\in\Lambda'\cap\pi_{\bT^2}^{-1}(A)$, then $(T_\ast')^n\bfx_\ast'\in\Lambda'$. This is equivalent to the statement that the equation
\begin{equation}
\label{EqIterMultiCocycle}
R(x+n\alpha,y+n\beta)=R(x,y)+kH_n'(x,y)
\end{equation}
holds for $\rmm_{\bT^2}$-almost all $(x,y)$.

\begin{proof}[Proof of Lemma \ref{LemMultiCocycle}]
Suppose $k\in\bN$ and $R:\bT^2\mapsto \bT^1$ is a measurable solution of \eqref{EqLemMultiCocycle}. Let \begin{equation}\label{EqPfLemMultiCocycle01}\delta_1=\dfrac{|k(p^2-q^2)d_1-k(p^2-q^2)\beta-\beta|}{24k(p^2+q^2)(L+|\alpha|+|\beta|)},\end{equation} and \begin{equation}\label{EqPfLemMultiCocycle02}\nu=\dfrac{6}{|k(p^2-q^2)d_1-k(p^2-q^2)\beta-\beta|},\end{equation} where  $d_1$ is the degree of $h$ in $x$ and $L$ is the Lipschitz constant of $h$. Note $\delta_1>0$ and $\nu<\infty$ because $p>q$, $k>0$, $p,q,k,d_1\in\bZ$ and $\beta\notin\bQ$.  By Luzin's theorem, we can find a compact subset $\Phi\subset\bT^2$ of measure greater than $1-\delta_1$ such that $R$ is continuous when restricted to $\Phi$. 

Choose $\delta_2\in(0,\min(\frac16,\delta_1))$ such that  if $(x,y),(x',y') \in \Phi$ and $\|(x,y) - (x',y')\|< \delta_2$, then $\|R(x,y)-R(x',y')\|< \frac{1}{3}$. We then fix $n\in\bN$ such that $\{n\alpha\}, \{n\beta\}\in(0,\delta_2)$ and $n >\nu$. Such integers $n$ exist because $T_0$ is minimal on $\bT^2$.

For $(x,y)\in(0,1-\delta_2)^2$, we have that $x+\{n\alpha\},y+\{n\beta\}\in(0,1)$ and $\lfloor x+n\alpha\rfloor=\lfloor n\alpha\rfloor$, $\lfloor y+n\beta\rfloor=\lfloor n\beta\rfloor$. Hence, \begin{equation}\begin{aligned}\label{EqPfLemMultiCocycle1}H_n'(x,y)=&H_n(x,y)+n(p^2-q^2)(\alpha y-\beta x)
-\lfloor n\beta\rfloor(x+n\alpha)\\
&+\lfloor n\alpha\rfloor (y+n\beta)\big),\ \forall(x,y)\in(\delta_2,1-\delta_2)^2.\end{aligned}\end{equation}

On the other hand, for $(x,y)\in\Phi\cap\big(\Phi-(n\alpha,n\beta)\big)$, $\|R(x+n\alpha,y+n\beta)-R(x,y)\|<\frac13$. So by \eqref{EqIterMultiCocycle}, 
\begin{equation}\label{EqPfLemMultiCocycle2}\|kH_n'(x,y)\|<\frac13,\ \forall(x,y)\in\Phi\cap\big(\Phi-(n\alpha,n\beta)\big).\end{equation}

Because $\rmm_{\bT^2}\big(\Phi\cap\big(\Phi-(n\alpha,n\beta)\big)\big)>1-2\delta_1$ and $\rmm_{\bT^2}\big((0,1-\delta_2)^2)>(1-\delta_2)^2>1-2\delta_2>1-2\delta_1$, by combining \eqref{EqPfLemMultiCocycle1} and \eqref{EqPfLemMultiCocycle2}, we know that 
\begin{equation}\label{EqPfLemMultiCocycle3}\Big\|kH_n(x,y)+nk(p^2-q^2)(\alpha y-\beta x)-\lfloor n\beta\rfloor(x+n\alpha)+\lfloor n\alpha\rfloor (y+n\beta)\Big\|<\frac13\end{equation} on a subset $\Phi_1\subset[0,1)^2$ with $\rmm_{\bR^2}(\Phi_1)>1-4\delta_1$, where $\rmm_{\bR^2}$ is the Lebesgue measure on $\bR^2$.

Fix a continuous lifting $\tH_n:\bR^2\to\bR^1$ of the function $H_n:\bT^2\to\bT^1$. Then \eqref{EqPfLemMultiCocycle3} actually asserts the continuous function $$F_n(x,y)=k\tH_n(x,y)+nk(p^2-q^2)(\alpha y-\beta x)-\lfloor n\beta\rfloor(x+n\alpha)+\lfloor n\alpha\rfloor (y+n\beta)$$ takes values in $\bigcup_{m\in\bZ}(m-\frac13,m+\frac13)$ on $\Phi_1$.

Because $h$ has degree $d_1$ in $x$, $h_j$ has degree $jd_1$ in $x$. Thus $H(x,y)=h_p(px,py)-h_q(qx,qy)$ has degree $(p^2-q^2)d_1$ in $x$. It in turn follows that $H_n(x,y)$ has degree $n(p^2-q^2)d_1$ in $x$. In consequence, for all $y\in\bR$, $\tH_n(1,y)-\tH_n(0,y)=n(p^2-q^2)d_1$ and thus
\begin{equation}\label{EqPfLemMultiCocycle4}F_n(1,y)-F_n(0,y)=nk(p^2-q^2)d_1-nk(p^2-q^2)\beta-\lfloor n\beta\rfloor.\end{equation}

By Fubini's Theorem, there exists $y_0\in[0,1)$ such that 
\begin{equation}\rmm_{\bR}(\{x\in[0,1]:(x,y_0)\notin\Phi_1\})<4\delta_1.\end{equation}
Because $F_n$ takes values in $\bigcup_{m\in\bZ}(m-\frac13,m+\frac13)$ on $\Phi_1$, the image  \begin{equation}\label{EqPfLemMultiCocycle5}\{F(x,y_0):x\in[0,1],(x,y_0)\notin\Phi_1\}\subset\bR\end{equation} has at least Lebesgue measure \begin{equation}\label{EqPfLemMultiCocycle6}\begin{aligned}&\frac13(|F_n(1,y)-F_n(0,y)|-1)\\
\geq&\frac13(|nk(p^2-q^2)d_1-nk(p^2-q^2)\beta-n\beta|-2)\\
\geq&n\cdot \frac13|k(p^2-q^2)d_1-k(p^2-q^2)\beta-\beta|-1.\end{aligned}\end{equation}

On the other hand, because $h$ is $L$-Lipschitz, $h_j$ is $jL$-Lipschitz and $H(x,y)$ is $(p^2+q^2)L$-Lipschitz. It in turn follows that $H_n(x,y)$ is $n(p^2+q^2)L$-Lipschitz and so is $\tH_n$. From \eqref{EqPfLemMultiCocycle3}, the Lipschitz constant of $F_n$ is at most $$nk(p^2+q^2)L+nk|p^2-q^2|(|\alpha|+|\beta|)+(|\alpha|+|\beta|)\leq nk(p^2+q^2)(L+|\alpha|+|\beta|).$$ So the image \eqref{EqPfLemMultiCocycle5} has at most Lebesgue measure  \begin{equation}\label{EqPfLemMultiCocycle7}\begin{aligned}&nk(p^2+q^2)(L+|\alpha|+|\beta|)\cdot4\delta_1\\
\leq& n\cdot \frac16|k(p^2-q^2)d_1-k(p^2-q^2)\beta-\beta|.\end{aligned}\end{equation}

Comparing \eqref{EqPfLemMultiCocycle6} with \eqref{EqPfLemMultiCocycle7} yields that $$n\cdot \frac16|k(p^2-q^2)d_1-k(p^2-q^2)\beta-\beta|\leq 1.$$ However, this contradicts the hypothesis that $n>\nu$. We arrive at a contradiction and the statement is proven.
\end{proof}

\begin{proof}[Proof of Theorem \ref{ThmMain}]Lemma \ref{LemMultiCocycle} and Theorem \ref{ThmFurst} imply Proposition \ref{PropUE}, contradicting Proposition \ref{PropNonUE}. Thus the standing hypothesis in Section \ref{SecNonUE} can not be true. In other words, \eqref{EqSarnak} must hold.\end{proof}

\begin{bibdiv}
\begin{biblist}

\bib{B13a}{article}{
   author={Bourgain, J.},
   title={M\"obius-Walsh correlation bounds and an estimate of Mauduit and
   Rivat},
   journal={J. Anal. Math.},
   volume={119},
   date={2013},
   pages={147--163},
}

\bib{B13b}{article}{
   author={Bourgain, J.},
   title={On the correlation of the Moebius function with rank-one systems},
   journal={J. Anal. Math.},
   volume={120},
   date={2013},
   pages={105--130},
}

\bib{BSZ13}{article}{
   author={Bourgain, J.},
   author={Sarnak, P.},
   author={Ziegler, T.},
   title={Disjointness of M\"obius from horocycle flows},
   conference={
      title={From Fourier analysis and number theory to Radon transforms and
      geometry},
   },
   bool={
      series={Dev. Math.},
      volume={28},
      publisher={Springer, New York},
   },
   date={2013},
   pages={67--83},
}

\bib{D37}{article}{
   author={Davenport, H.},
   title={On some infinite series involving arithmetical functions II},
   journal={Quat. J. Math.},
   volume={8},
   date={1937},
   pages={313--320},
}

\bib{DK15}{article}{
   author={Downarowicz, Tomasz},
   author={Kasjan, Stanis{\l}aw},
   title={Odometers and Toeplitz systems revisited in the context of
   Sarnak's conjecture},
   journal={Studia Math.},
   volume={229},
   date={2015},
   number={1},
   pages={45--72},
   issn={0039-3223},
   review={},
}

\bib{EKL16}{article}{
   author={El Abdalaoui, El Houcein},
   author={Kasjan, Stanis{\l}aw},
   author={Lema{\'n}czyk, Mariusz},
   title={0-1 sequences of the Thue-Morse type and Sarnak's conjecture},
   journal={Proc. Amer. Math. Soc.},
   volume={144},
   date={2016},
   number={1},
   pages={161--176},
   issn={0002-9939},
   doi={},
}

\bib{EKLR15}{article}{
   author={El Abdalaoui, El Houcein},
   author={Ku{\l}aga-Przymus, J.},
   author={Lema{\'n}czyk, Mariusz},
   author={de la Rue, Thierry},
   title={The Chowla and the Sarnak conjectures from ergodic theory point of view},
   journal={Discrete Contin. Dyn. Syst.},
   date={2016},
   number={},
   pages={to appear, arXiv:1410.1673v3},
}

\bib{ELD14}{article}{
   author={El Abdalaoui, El Houcein},
   author={Lema{\'n}czyk, Mariusz},
   author={de la Rue, Thierry},
   title={On spectral disjointness of powers for rank-one transformations
   and M\"obius orthogonality},
   journal={J. Funct. Anal.},
   volume={266},
   date={2014},
   number={1},
   pages={284--317},
}

\bib{ELD15}{article}{
   author={El Abdalaoui, El Houcein},
   author={Lema{\'n}czyk, Mariusz},
   author={de la Rue, Thierry},
   title={Automorphisms with quasi-discrete spectrum, multiplicative functions and average orthogonality along short intervals},
   journal={International Mathematics Research Notices},
   volume={},
   date={2016},
   number={},
   pages={to appear},
}

\bib{FJ15}{article}{
   author={Fan, Aihua},
   author={Jiang, Yunping},
   title={Oscillating sequences, minimal mean attractability and minimal mean-Lyapunov-stability},
   journal={preprint},
   volume={},
   date={2015},
   number={},
   pages={arXiv:1511.05022v1},
}

\bib{FKLM15}{article}{
   author={Ferenzi, S\'ebastien},
   author={Ku\l{}aga-Przymus, Joanna},
   author={Lema\'nczyk, Mariusz},
   author={Mauduit, C.},
   title={Substitutions and M\"obius disjointness},
   journal={preprint},
   date={2015},
   pages={arXiv:1507.01123v1},
}

\bib{FFKL16}{article}{
   author={Flaminio, Livio},
author={Fraczek, Krzyszto},
   author={Ku\l{}aga-Przymus, Joanna},
   author={Lema\'nczyk, Mariusz},
   title={Approximate orthogonality of powers for ergodic affine unipotent diffeomorphisms on nilmanifolds },
   journal={preprint},
   date={2016}
}

\bib{F61}{article}{
   author={Furstenberg, H.},
   title={Strict ergodicity and transformation of the torus},
   journal={Amer. J. Math.},
   volume={83},
   date={1961},
   pages={573--601},
}

\bib{F63}{article}{
   author={Furstenberg, H.},
   title={The structure of distal flows},
   journal={Amer. J. Math.},
   volume={85},
   date={1963},
   pages={477--515},
}

\bib{G12}{article}{
   author={Green, Ben},
   title={On (not) computing the M\"obius function using bounded depth
   circuits},
   journal={Combin. Probab. Comput.},
   volume={21},
   date={2012},
   number={6},
   pages={942--951},
}

\bib{GT12a}{article}{
   author={Green, Ben},
   author={Tao, Terence},
   title={The quantitative behaviour of polynomial orbits on nilmanifolds},
   journal={Ann. of Math. (2)},
   volume={175},
   date={2012},
   number={2},
   pages={465--540},
}

\bib{GT12}{article}{
   author={Green, Ben},
   author={Tao, Terence},
   title={The M\"obius function is strongly orthogonal to nilsequences},
   journal={Ann. of Math. (2)},
   volume={175},
   date={2012},
   number={2},
   pages={541--566},
}

\bib{HLSY15}{article}{
   author={Huang, Wen},
   author={Lian, Zhengxing},
   author={Shao, Song},
   author={Ye, Xiangdong},
   title={Sequences from zero entropy noncommutative toral automorphisms and Sarnak Conjecture},
   journal={preprint},
   volume={},
   date={2015},
   number={},
   pages={arxiv:1510.06022v1},
}

\bib{HWZ16}{article}{
   author={Huang, Wen},
   author={Wang, Zhiren},
   author={Zhang, Guohua},
   title={M\"{o}bius disjointness for topological model of any ergodic system with discrete spectrum},
   journal={preprint},
   volume={},
   date={2016},
   number={},
   pages={arxiv:1608.08289v2},
}

\bib{K15}{article}{
   author={Karagulyan, Davit},
   title={On M\"obius orthogonality for interval maps of zero entropy and
   orientation-preserving circle homeomorphisms},
   journal={Ark. Mat.},
   volume={53},
   date={2015},
   number={2},
   pages={317--327},
}

\bib{K86}{article}{
   author={K\'atai, I.},
   title={A remark on a theorem of H. Daboussi},
   journal={Acta Math. Hungar.},
   volume={47},
   date={1986},
   number={1-2},
   pages={223--225},
   issn={0236-5294},
}

\bib{KL15}{article}{
   author={Ku{\l}aga-Przymus, J.},
   author={Lema{\'n}czyk, M.},
   title={The M\"obius function and continuous extensions of rotations},
   journal={Monatsh. Math.},
   volume={178},
   date={2015},
   number={4},
   pages={553--582},
}

\bib{LS15}{article}{
   author={Liu, Jianya},
   author={Sarnak, Peter},
   title={The M\"obius function and distal flows},
   journal={Duke Math. J.},
   volume={164},
   date={2015},
   number={7},
   pages={1353--1399},
   issn={0012-7094},
}

\bib{MMR14}{article}{
   author={Martin, Bruno},
   author={Mauduit, Christian},
   author={Rivat, Jo{\"e}l},
   title={Th\'eor\'eme des nombres premiers pour les fonctions digitales},
   language={French},
   journal={Acta Arith.},
   volume={165},
   date={2014},
   number={1},
   pages={11--45},
}

\bib{MRT15}{article}{
   author={Matom{\"a}ki, Kaisa},
   author={Radziwi{\l}l, Maksym},
   author={Tao, Terence},
   title={An averaged form of Chowla's conjecture},
   journal={Algebra Number Theory},
   volume={9},
   date={2015},
   number={9},
   pages={2167--2196},
   issn={1937-0652},
   review={},
   doi={},
}

\bib{MR10}{article}{
   author={Mauduit, Christian},
   author={Rivat, Jo{\"e}l},
   title={Sur un probl\`eme de Gelfond: la somme des chiffres des nombres
   premiers},
   language={French, with English and French summaries},
   journal={Ann. of Math. (2)},
   volume={171},
   date={2010},
   number={3},
   pages={1591--1646},
}

\bib{MR15}{article}{
   author={Mauduit, Christian},
   author={Rivat, Jo{\"e}l},
   title={Prime numbers along Rudin-Shapiro sequences},
   journal={J. Eur. Math. Soc. (JEMS)},
   volume={17},
   date={2015},
   number={10},
   pages={2595--2642},
   issn={1435-9855},
}

\bib{M16}{article}{
   author={M\"ullner, Clemens},
   title={Automatic sequences fulfill the Sarnak conjecture},
   journal={Duke Math J.},
   volume={},
   date={2016},
   number={},
   pages={to appear},
   issn={},
}

\bib{P15}{article}{
   author={Peckner, Ryan},
   title={M\"obius disjointness for homogeneous dynamics},
   journal={preprint},
   date={2015},
   pages={arXiv:1506.07778v1},
}

\bib{S09}{article}{
   author={Sarnak, Peter},
   title={Three lectures on the M\"obius function, randomness and dynamics},
   journal={lecture notes, IAS},
   date={2009},
}

\bib{V16}{article}{
   author={Veech, William},
   title={Moebius orthogonality for generalized Morse-Kakutani flows},
   journal={Amer. J. Math.},
   volume={},
   date={2016},
   number={},
   pages={to appear},
   issn={},
}

\bib{W17}{article}{
   author={Wang, Zhiren},
   title={M\"obius disjointness for analytic skew products},
   journal={Invent. Math.},
   volume={209},
   date={2017},
   number={1},
   pages={175--196},
}

\end{biblist}
\end{bibdiv}

\end{document}